\newcommand*{\da@rightarrow}{\mathchar"0\hexnumber@\symAMSa 4B }
\newcommand*{\da@leftarrow}{\mathchar"0\hexnumber@\symAMSa 4C }
\newtheorem{thm}{Theorem}[section]
\newtheorem{prop}[thm]{Proposition}
\newtheorem{lem}[thm]{Lemma}
\newtheorem{cor}[thm]{Corollary}
\newtheorem{conj}[thm]{Conjecture}
\newtheorem{ques}[thm]{Question}
\theoremstyle{definition}
\newtheorem{defn}[thm]{Definition}
\theoremstyle{remark}
\newtheorem{remark}[thm]{Remark}
\numberwithin{equation}{section}
\DeclareSymbolFontAlphabet{\mathbb}{AMSb} %to ensure that the meaning of \mathbb does not change
\DeclareSymbolFontAlphabet{\mathbbl}{bbold}
\newcommand{\N}{\mathbb{N}}
\newcommand{\F}{\mathbb{F}}
\newcommand{\Z}{\mathbb{Z}}
\newcommand{\Q}{\mathbb{Q}}
\newcommand{\R}{\mathbb{R}}
\newcommand{\C}{\mathbb{C}}
\newcommand{\ok}{\mathcal{O}_{K}}
\newcommand{\OK}{\mathcal{O}_{K}}
\newcommand{\M}{\mathcal{M}}
\newcommand{\G}{\mathcal{G}}
\newcommand{\Img}{\operatorname{Im}}
\newcommand{\Gal}{\operatorname{Gal}}
\newcommand{\bE}{\breve{E}}
\newcommand{\bF}{\breve{\mathcal{F}}}
\newcommand{\Spec}{\mathrm{Spec}}
\newcommand{\Sp}{\mathrm{Sp}}
\newcommand{\defeq}{\vcentcolon=}
\renewcommand{\Im}{\operatorname{Im}}
\begin{document}

% Title
\title{Towards the $p$-adic nowhere density conjecture of Hecke Orbits }
\author{Yu Fu}
\begin{abstract}{
 We propose a conjecture on the $p$-adic nowhere density of the Hecke orbit of subvarieties of Hodge type Shimura varieties. By investigating the monodromy of $p$-adic Galois representations associated with points on such Shimura varieties, we prove that the locus in a formal $\ok$-neighborhood of a mod $p$ point that has large monodromy is open dense, where $K$ is a totally ramified finite extension of $\breve{\Q}_p.$
}
\end{abstract}
\maketitle

\section{Introduction}
Let $\mathcal{S}h_{k}$ be a Shimura variety over a field $k$. The \textit{Hecke orbit} $\mathcal{H}(x)$ of a point $x \in \mathcal{S}h_{k}$ is the set of points $y \in \mathcal{S}h_{k}$ related to $x$ by isogenies. Let $k$ be an algebraically closed field of positive characteristic $p$. If the degrees of the isogenies in the previous definition are not divisible by $p$, then we call the set the \textit{prime-to-$p$ Hecke orbit} and denote it by $\mathcal{H}^{(p)}(x)$. The Hecke orbit conjecture predicts that Hecke symmetries characterize the central foliation in $\mathcal{S}h_{k}$ and has the original formulation by Oort: 
\begin{conj}
Given any $\overline{\mathbb{F}}_p$-point $x=\left[\left(A_x, \lambda_x\right)\right]$ in the moduli space $\mathcal{A}_g$ of $g$ dimensional principally polarized abelian varieties in characteristic $p>0$, $\mathcal{H}(x)$ is Zariski dense in the Newton polygon stratum of $\mathcal{A}_g$ which contains $x$.
\end{conj}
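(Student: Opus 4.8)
The plan is to deduce Conjecture~1.1 by splitting the asserted Zariski density into a ``continuous'' part and a ``discrete'' part in the style of Chai--Oort, and then to feed in the large-monodromy phenomenon that this paper makes available. Write $W$ for the Newton polygon stratum of $\mathcal{A}_g$ containing $x$. I would first invoke Oort's foliation of $W$ into central leaves together with the almost product structure $W \sim C(x) \times I(x)$, where $C(x)$ and $I(x)$ are the central leaf and the isogeny leaf through $x$. Since $\mathcal{H}(x)$ is, up to the usual finite-index bookkeeping, the composite of the prime-to-$p$ Hecke orbit $\mathcal{H}^{(p)}(x) \subseteq C(x)$ with the $p$-power Hecke orbit, and since the $p$-power Hecke correspondences translate $C(x)$ inside $W$ along the $I(x)$-direction, the conjecture reduces to: (a) the union of the central leaves met by the $p$-power Hecke orbit of $x$ is Zariski dense in $W$; and (b) $\mathcal{H}^{(p)}(x)$ is Zariski dense in the \emph{single} leaf $C(x)$.

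For (a) I would pass, within the $p$-power Hecke orbit, to a hypersymmetric point $x'$ whose isogeny leaf is as large as the almost product structure permits, and use the $\mathrm{SL}_2$-type one-parameter families of $p$-power isogenies on the associated Rapoport--Zink space to sweep out the $I$-direction; together with the almost product structure this yields density of $\bigcup_y C(y)$ in $W$. This is the more routine half.

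The heart of the argument is (b). Let $Z$ be the reduced Zariski closure of $\mathcal{H}^{(p)}(x)$ in $C(x)$; it is Hecke-stable, and after replacing it by the component through $x$ I want to show $Z = C(x)$. The first ingredient is that $Z$ has \emph{large prime-to-$p$ monodromy}: the image of $\pi_1(Z,x)$ on the prime-to-$p$ Tate module of the universal abelian variety is open in the relevant stabilizer inside $G(\mathbb{A}_f^p)$. This is the prime-to-$p$ counterpart of the large-monodromy result established here $p$-adically on a formal $\OK$-neighborhood of a mod $p$ point, and the point is that the \emph{density} of the large-monodromy locus prevents the Hecke-stable $Z$ from being swallowed by the complementary small-monodromy closed subset, so that, by Hecke-stability, large monodromy holds all along $Z$. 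The second ingredient is local rigidity: the formal completion $\widehat{C(x)}_x$ carries a cascade structure (an iterated fibration by formal $p$-divisible groups refining the Serre--Tate coordinates), and a closed formal subvariety invariant under a sufficiently large subgroup of the monodromy action must be ``Tate-linear'', i.e.\ a sub-cascade. Applying this to $\widehat{Z}_x \subseteq \widehat{C(x)}_x$ and combining with the large monodromy from the first ingredient, the only Hecke-stable sub-cascade is everything, so $\widehat{Z}_x = \widehat{C(x)}_x$; since $C(x)$ is irreducible (which I would cite, via irreducibility of the relevant Igusa towers), this forces $Z = C(x)$, completing (b).

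The step I expect to be the genuine obstacle is the large-monodromy input for an \emph{arbitrary} Hecke-stable subvariety $Z$ --- not merely for the full leaf --- in generality beyond the Siegel case. This is precisely where the main theorem of the present paper is designed to intervene: openness and density of the large-monodromy locus in a formal $\OK$-neighborhood is exactly the ingredient that rules out $Z$ degenerating into a proper monodromy-reducing locus, after which Hecke-stability propagates the conclusion and the rigidity argument becomes formal. Step (a) and the cascade rigidity are then comparatively standard, so the whole proof hinges on transporting the $p$-adic density statement proved here into the monodromy control needed in (b).
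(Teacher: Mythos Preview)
The statement you are attempting to prove is Conjecture~1.1, and the paper does \emph{not} prove it. It is quoted as Oort's original formulation of the Hecke orbit conjecture, serving purely as background and motivation; the paper's contribution lies elsewhere (Theorem~\ref{padic} and Conjecture~\ref{conj}, which concern $p$-adic nowhere density in characteristic zero). There is therefore no ``paper's own proof'' against which to compare your proposal.

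Beyond that, your attempt to feed the paper's main result into a proof of Conjecture~1.1 rests on a category error. Theorem~\ref{padic} concerns the image of the \emph{$p$-adic} Galois representation attached to an $\mathcal{O}_K$-lift of a mod~$p$ point --- a characteristic-zero, crystalline statement about $\rho_y:\Gal(\bar{\breve{E}}/K)\to G(\mathbb{Q}_p)$. The input you need for step~(b) is openness of the \emph{prime-to-$p$} monodromy of $\pi_1(Z,x)$ acting on the $\ell$-adic Tate module in characteristic $p$. These are different monodromy groups on different Tate modules over different bases, and the paper's density statement on a formal $\mathcal{O}_K$-neighborhood says nothing about the Zariski closure of $\mathcal{H}^{(p)}(x)$ inside a central leaf over $\overline{\mathbb{F}}_p$. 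The bridge you assert --- ``density of the large-monodromy locus prevents the Hecke-stable $Z$ from being swallowed by the complementary small-monodromy closed subset'' --- does not exist: $Z$ lives in the special fiber, the large-monodromy locus of Theorem~\ref{padic} lives among $\mathcal{O}_K$-points of a formal neighborhood, and there is no mechanism in the paper connecting the two. Your outline of (a) and the cascade/Tate-linearity rigidity in (b) is a reasonable caricature of the Chai--Oort strategy, but the paper supplies none of the ingredients you would actually need to execute it.
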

    The prediction can be generalized to the special fibers of Shimura varieties at primes of good reduction: any prime-to-$p$ Hecke orbit is dense in the central leaf containing it. This was recently proved in various settings by a series of papers; see \cite{Yu06}, \cite{Chai05}, \cite{Cha06}, \cite{xiao20}, \cite{HoftenOrdinary}, \cite{DvV}. 
    
    However, it is noticeable that the behavior of Hecke correspondences induced by isogenies between abelian varieties in characteristic $p$ and $p$-adically is significantly different from the behavior in characteristic zero and under the topology induced by Archimedean valuations. For example, one can show that $\mathcal{H}(x)$ is dense in the Archimedean topology for $x \in \mathcal{S}h_{\C}$ \cite[Theorem 6.1]{EY03}. Also, there are works on the Archimedean equidistribution properties of Hecke orbits, we refer the reader to \cite{Duk88,CU,Zha05}.  

On the other hand, it is possible to prove that the Hecke orbit of a point $x \in \mathcal{S}h(\C_p)$ on a PEL type Shimura variety with good reduction at $p$ is nowhere dense in the $p$-adic topology, by showing the nowhere density of the Picard number jumping locus in a family of abelian varieties with good reduction at $p$, which implies that points with extra endomorphisms are nowhere dense; see\cite{MP}. 

\vspace{1em}
Previously, we have focused on the theory of Hecke orbits of points on Shimura varieties. Inspired by the work of Maulik-Poonen, it is natural to ask for a higher-dimensional analogue question, in particular in the setting of $p$-adic topology.
\begin{ques}
     Can we describe the geometry of the (irreducible components of) positive-dimensional subvarieties of a Shimura variety that are stable under the Hecke operators? 
\end{ques}

We propose the following conjecture on the $p$-adic nowhere density of the Hecke orbit:

 \begin{conj}\label{conj}
   Let $Z$ be a positive-dimensional subvariety of a Shimura variety $\mathcal{S}h$. The Hecke orbit of $Z$ is $p$-adically nowhere dense in $\mathcal{S}h(\C_p)$.
   \end{conj}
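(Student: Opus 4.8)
The plan is to prove the conjecture one residue disc at a time and to use the main theorem to pin down a ``generic'' point that no Hecke orbit can $p$-adically approach. Since $\mathcal{S}h$ has good reduction at $p$, every $y_0\in\mathcal{S}h(\C_p)$ reduces to a point $\bar x\in\mathcal{S}h(\Fpbar)$, and the set $\mathcal{U}_{\bar x}$ of $\C_p$-points reducing to $\bar x$ is an open neighborhood of $y_0$, identified with the $\C_p$-points of the generic fibre of the formal deformation space $\widehat{\mathcal{S}h}_{/\bar x}$ over $\mathcal{O}_{\breve{\Q}_p}$; the $\mathcal{U}_{\bar x}$ cover $\mathcal{S}h(\C_p)$. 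As nowhere density is local, it suffices to show that $\mathcal{H}(Z)$ is nowhere dense in each $\mathcal{U}_{\bar x}$. I would also factor each Hecke correspondence into its prime-to-$p$ part and its $p$-power part. A prime-to-$p$ isogeny is an isomorphism on $p$-divisible groups, hence by Serre--Tate identifies $\mathcal{U}_{\bar x}$ isomorphically with another residue disc, so this part moves $Z$ around rigidly; a $p$-power isogeny strictly contracts in Serre--Tate/Rapoport--Zink coordinates and may change the Newton point, and is the ``easy'', contracting direction. Thus the heart of the matter is the prime-to-$p$ Hecke orbit $\mathcal{H}^{(p)}(Z)$.

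\textbf{The Hecke-invariant.} For $y$ defined over a totally ramified finite extension $K$ of $\breve{\Q}_p$ one has the crystalline representation $\rho_y\colon\Gal(\overline K/K)\to\mathrm{GL}(T_pA_y)$, and any isogeny $A_y\to A_{y'}$ induces a $\Gal(\overline K/K)$-equivariant isomorphism $V_pA_y\cong V_pA_{y'}$; hence the Zariski closure $G_y$ of the image of $\rho_y$ — the $p$-adic monodromy group — is constant up to conjugacy along the full Hecke orbit, and $T_pA_y$ itself is constant along the prime-to-$p$ Hecke orbit. By the main theorem, the locus $\mathcal{U}_{\bar x}^{\mathrm{big}}\subseteq\mathcal{U}_{\bar x}$ where $G_y$ is as large as the Shimura datum permits is open and dense (and, taking the union over all such $K$, dense among all $\C_p$-points).

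\textbf{The main argument.} A relatively closed subset of an open set that is meager in it is nowhere dense, so it is enough to show $\overline{\mathcal{H}(Z)}\cap\mathcal{U}_{\bar x}^{\mathrm{big}}$ is meager in $\mathcal{U}_{\bar x}^{\mathrm{big}}$, and for this it suffices to prove that every maximal-monodromy point lying in the $p$-adic closure of $\mathcal{H}(Z)$ already lies in $\mathcal{H}(Z)=\bigcup_g T_g(Z)$: then $\overline{\mathcal{H}(Z)}\cap\mathcal{U}_{\bar x}^{\mathrm{big}}$ is a countable union of proper closed analytic subvarieties, hence meager, hence nowhere dense. So let $y_0=\lim_k T_{g_k}(z_k)$ with $z_k\in Z$ and $G_{y_0}$ maximal. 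If $\deg(g_k)$ stays bounded along a subsequence, then the $T_{g_k}$ range over finitely many Hecke correspondences of bounded degree, and properness of these correspondences forces $y_0\in T_g(Z)$ for some $g$. The remaining case $\deg(g_k)\to\infty$ is the crux: I would rule it out by reducing mod $p$ — the points $\bar z_k=\mathrm{red}(z_k)\in\bar Z$ lie in the prime-to-$p$ Hecke orbit of $\bar x$, which by the characteristic-$p$ results cited in the introduction is Zariski dense in the central leaf of $\bar x$ — and then combining this with a $p$-adic functional-transcendence (Ax--Lindemann-type) constraint on the analytic subvarieties $T_{g_k}(Z)\cap\mathcal{U}_{\bar x}$ of the deformation space, whose monodromy over $Z$ is controlled by the main theorem, to conclude that an isogeny chain of unbounded degree cannot $p$-adically limit onto a point of maximal monodromy.

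\textbf{Main obstacle.} The decisive and hardest step is precisely this last one: a genuinely $p$-adic non-accumulation statement showing that the infinitely many Hecke translates of $Z$ of unbounded degree do not conspire to fill a $p$-adic ball, equivalently that their closure meets the maximal-monodromy locus only along the translates themselves. This needs control of $p$-adic limits of families of isogenies of unbounded degree, and away from the ordinary locus it requires working on Rapoport--Zink spaces and local models rather than with Serre--Tate coordinates; a $p$-adic Ax--Lindemann input, fed by the large-monodromy theorem, appears to be what is required, and proving it in the needed generality is the principal difficulty. Secondary obstacles are upgrading ``large monodromy is open dense'' so that it may be used transversally to $Z$, and a careful treatment of the $p$-power part of the Hecke correspondences and of the non-basic Newton strata.
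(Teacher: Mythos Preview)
The statement you are trying to prove is Conjecture~\ref{conj}, and the paper does \emph{not} prove it; it is explicitly presented as open (``still wild open, even in the case where $Z$ is a positive-dimensional Shimura subvariety''). What the paper proves is Theorem~\ref{padic}, which is offered as evidence, together with a discussion of why the present methods do not suffice. So there is no ``paper's own proof'' to compare against; what can be compared is your proposed strategy versus the paper's own analysis of the obstruction.

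Your proposal contains a genuine gap, and it is not the one you flag. You write that by the main theorem the large-monodromy locus is open and dense, ``and, taking the union over all such $K$, dense among all $\C_p$-points''. Density may survive the union, but openness does not, and your Baire-category argument in the next paragraph silently uses openness (or at least that the complement of the large-monodromy locus is small) over $\C_p$. The paper is explicit that this is \emph{the} obstruction: Theorem~\ref{padic} is stated only for $\OK$-points with $K/\breve{E}$ a fixed finite (totally ramified) extension, and the paragraph following Theorem~\ref{padic} explains why one cannot pass to $\C_p$. Concretely, if $y$ has monodromy containing $H_n$ and $z$ is a nearby point defined over a \emph{wildly} ramified $K'/K$, then after base change to $K'$ the open neighbourhood of large monodromy produced by Proposition~\ref{open neighborhoods} shrinks; since a $\C_p$-ball contains points over extensions of arbitrarily large wild ramification, the neighbourhoods do not stabilise and you get no open set of $\C_p$-points with large monodromy. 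This is precisely why the paper can only deduce the nowhere density of the \emph{prime-to-$p$} Hecke orbit of a single point (Corollary following Theorem~\ref{padic}), where the field of definition is essentially fixed along the orbit, and not the full conjecture.

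By contrast, the ``main obstacle'' you identify --- a $p$-adic Ax--Lindemann/non-accumulation statement for Hecke translates of unbounded degree --- is an \emph{additional} difficulty, not the one the paper isolates. Even granting such a statement, your argument would still need the large-monodromy locus to be open (or at least comeager) in $\mathcal{U}_{\bar x}(\C_p)$, which is exactly what is not known. Two smaller points: your claim that bounded $\deg(g_k)$ forces finitely many correspondences needs care (one must bound over all primes simultaneously, and in any case countably many suffice for your meagerness argument), and your treatment of the $p$-power part as ``contracting'' is only literally true on the ordinary locus; away from it one is on Rapoport--Zink space and the dynamics are more subtle, as you yourself note.
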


\vspace{1em}
The conjecture is still wild open, even in the case where $Z$ is a positive-dimensional Shimura subvariety. The results in this direction have various possible applications. For example, applying the conjecture to the Torelli locus, which is a subvariety of $\mathcal{A}_g$, and leveraging the fact that $\overline{\mathbb{Q}}$ is dense in $\mathbb{C}_p$, one obtains a new proof of the existence of abelian varieties over $\overline{\mathbb{Q}}$ that are not isogenous to Jacobians. Similarly, there are potential applications to Zilber-Pink-type questions. For example, it provides mutual evidence of the existence of abelian varieties over $\overline{\Q}$ that are not quotients of low-dimensional Jacobians, which was proved in the very recent work of Tsimerman \cite{Tsi}. Infact, if there exists an abelian variety $A$ of dimension $g \ge 3$, a curve $C$ of genus $g+1$, and a non-trivial map $\phi: \operatorname{Jac}(C) \to A$. Then the Jacobian $\operatorname{Jac}
(C)$ is isogenous to $A \times E$ for some elliptic curve $E$. Tsimerman's theorem indicates that the Hecke orbit of the Torelli locus is ``small'' so that it is disjoint from $\mathcal{A}_{g} \times \mathcal{A}_{1}$. In addition, the conjecture implies, as a direct corollary, that the set of CM points in $\mathcal{S}h_{\C_p}$ is closed in the rigid analytic topology. 

\vspace{1em}

We investigate the conjecture by studying the monodromy of $p$-adic Galois representations attached to mod $p$ points on a Shimura variety. Our main result is the following formal density theorem that describes the locus in the deformation spaces whose points have large $p$-adic monodromy. 

\vspace{1em}
To state our results, we fix some notation. Let $p \geq 3$ be a prime. Let $(G, X)$ be a Hodge-type Shimura datum such that $G$ is unramified at $p$ defining the Shimura variety $\mathcal{S}h$. Let $(G, [b], \mu)$ be the local Shimura datum associated with a point $x \in \mathcal{S}h(\overline{\mathbb{F}}_p)$, let $E \subset \overline{\Q}_p$ be the field of definition of $\mu$, and let $\breve{E}$ be the completion of the maximal unramified extension of $E$ at the prime corresponding to the choice of the embedding $\overline{\Q} \hookrightarrow \overline{\Q}_p$ (see Section \ref{localdatum} for details). Let $A_x$ be the abelian variety associated with $x$ and denote by $A_x[p^\infty]$ the $p$-divisible group of $A_x.$ The formal neighborhood $\mathcal{S}^{x}$ of $x$ consists of $\C_p$-deformations of $A_{x}[p^{\infty}]$, due to the fact that deforming an abelian variety is equivalent to deforming its $p$-divisible group. Let $K$ be a finite extension of $\breve{E}$, and let $y$ be a $\ok$-point in $\mathcal{S}^x$. There is a Galois representation  $$\rho_y: \Gal(\bar{\breve{E}}/K) \to V_p(A_y[p^\infty])$$ on the rational Tate module $$V_p(A_y[p^\infty]):=V_p\left(A_y[p^\infty] \times_{\mathcal{O}_K} K\right)=T_p(A_y[p^\infty]) \otimes_{\mathbb{Z}_p} \mathbb{Q}_p$$ 
 of the $p$-divisible group $A_y[p^\infty]$, where $A_y$ is the abelian variety associated with $y$.
\begin{defn}\label{defnlarge monodromy}
We say that $y$ has \textit{large monodromy} if the image of $\rho_y$ contains an open subgroup of the derived group $G^{der}$. 
\end{defn}
We prove the following theorem.
\begin{thm}\label{padic}

	Let $x \in \mathcal{S}h(\overline{\mathbb{F}}_p)$ be an integral point and let $(G, [b], \mu)$ be the local Hodge-Shimura datum associated with $x$, such that the Newton polygon and the Hodge polygon associated with $[b]$ and $\mu$ do not touch outside endpoints (i.e. satisfy the HN-irreducibility). Let $\mathcal{S}^{K,x}$ be the set of $\OK$-points of $\mathcal{S}^{x}$ each of these special fibers is isomorphic to $A_x[p^{\infty}]$. Let $\mathcal{S}_{\operatorname{max}}^{K,x}$ be the subset of points that have large monodromy. Then
	 $$\mathcal{S}_{\operatorname{max}}^{K,x} \subset \mathcal{S}^{K,x}$$ 
	 is $p$-adically open dense. 

\end{thm}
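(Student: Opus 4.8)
The plan is to transport the question to the crystalline (Grothendieck--Messing/Rapoport--Zink) period morphism, reinterpret ``large monodromy'' as a genericity statement about filtered isocrystals, and then combine a semicontinuity (openness) argument with the Zariski density of $p$-adic polydiscs.

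\emph{Set-up.} Because $\mathcal{S}h$ has a smooth integral model, $\mathcal{S}^x\cong\mathrm{Spf}\,\mathcal{O}_{\bE}[[t_1,\dots,t_d]]$ with $d=\dim\mathcal{S}h=\dim\mathrm{Fl}(G,\mu)$, so that $\mathcal{S}^{K,x}$ is the polydisc $\mathfrak{m}_K^{\,d}$ with its $p$-adic topology. The universal $p$-divisible group over $\mathcal{S}^x$ with its Tate tensors $(s_\alpha)$ gives the crystalline period morphism $\pi\colon\mathcal{S}^x_\eta\to\mathrm{Fl}(G,\mu)^{\mathrm{an}}$, which is \'etale onto an open subset of the admissible locus and sends $y\in\mathcal{S}^{K,x}$ to the Hodge filtration $\mathrm{Fil}_y$ on $\D:=\D(A_x[p^\infty])[1/p]$, a filtration of type $\mu$ stabilised by the $s_\alpha$. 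The crucial point is that the isocrystal $(\D,\varphi)$ stays fixed (it is the rational Dieudonn\'e module, with Newton point $[b]$); only $\mathrm{Fil}_y$ varies. Since $A_y$ has good reduction, $\rho_y$ is crystalline and, by Colmez--Fontaine, corresponds to the weakly admissible filtered isocrystal $(\D,\varphi,\mathrm{Fil}_y)$; as the $s_\alpha$ are Galois-invariant its Tannakian monodromy group $H_y$ lies in $G$, and $H_y$ is the common stabiliser of the subobjects of $(\D^{\otimes},\varphi,\mathrm{Fil}_y)$, i.e.\ of the $\varphi$-stable $W\subseteq\D^{\otimes}$ with $t_H(W)=t_N(W)$. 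For $p$-adic Galois representations ``image contains an open subgroup of $G^{\mathrm{der}}$'' is equivalent to $H_y\supseteq G^{\mathrm{der}}$, so it suffices to treat the condition $H_y\supseteq G^{\mathrm{der}}$; in particular $H_y=G$ already suffices.

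\emph{Openness and reduction to genericity.} For $\varphi$-stable $W$ one always has $t_H(W)\le t_N(W)$, with $t_N(W)$ locally constant and $t_H(W,-)$ upper semicontinuous in the filtration, so by the standard semicontinuity of monodromy the ``extra Hodge--Newton touching'' locus $Z_0=\{H_y\not\supseteq G^{\mathrm{der}}\}$ is Zariski closed in $\mathrm{Fl}(G,\mu)$; pulling back along the \'etale $\pi$, its preimage $Z\subseteq\mathcal{S}^x_\eta$ is a closed analytic subset, so $\mathcal{S}^{K,x}_{\max}=\mathcal{S}^{K,x}\setminus Z$ is open in $\mathcal{S}^{K,x}$. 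For density it now suffices to show $Z_0\subsetneq\mathrm{Fl}(G,\mu)$: if so, then $Z\subsetneq\mathcal{S}^x_\eta$ is a proper closed analytic subset, and were it to contain a non-empty $p$-adic-open $W_0\subseteq\mathcal{S}^{K,x}$, then $W_0$ --- a polydisc, hence Zariski dense in $\mathcal{S}^x$ --- would have image under $\pi$ that is Zariski dense in $\mathrm{Fl}(G,\mu)$ yet contained in the proper closed $Z_0$, a contradiction.

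\emph{The hard step.} Everything thus comes down to: \emph{under HN-irreducibility, a generic type-$\mu$ filtration on the fixed $(\D,\varphi)$ has monodromy group $G$}. One must rule out, for generic $\mathrm{Fil}$, every proper $\varphi$-stable $W\subsetneq\D^{\otimes}$ on which $G^{\mathrm{der}}$ acts non-trivially and with $t_H(W)=t_N(W)$. Such $W$'s form, rank by rank, a scheme cut out of a Grassmannian by the $\varphi$-stability condition; the equality $t_H(W)=t_N(W)$ is an extra closed condition whose codimension has to be compared with the dimension of that scheme. Hodge--Newton theory does exactly this: an interior contact of the Newton polygon of $[b]$ with the Hodge polygon of $\mu$ is equivalent to the presence of a \emph{forced} subobject (the Hodge--Newton sub-isocrystal) in $(\D,\varphi,\mathrm{Fil})$ for every $\mathrm{Fil}$ --- which is exactly the obstruction, and the reason the statement fails without the hypothesis --- whereas in the absence of such a contact the only $\varphi$-stable subspaces surviving the equality $t_H=t_N$ for a generic $\mathrm{Fil}$ are those fixed by $G^{\mathrm{der}}$. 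Making this precise is the main obstacle: it requires the description of the admissible locus together with the irreducibility/dimension estimates for the period domain in the HN-irreducible case (compare the theory of Rapoport--Zink period spaces and of Dat--Orlik--Rapoport). Two subsidiary points should be checked along the way: that the admissible locus is non-empty, so that $\pi$ has Zariski-dense image (this follows from $[b]\in B(G,\mu)$ together with HN-irreducibility), and that the two formulations of large monodromy agree in the needed direction (a closed, $p$-adic-analytic, Zariski-dense subgroup of a group containing the semisimple $G^{\mathrm{der}}$ is open in $G^{\mathrm{der}}$).
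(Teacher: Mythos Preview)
Your overall architecture---transport to the period domain, identify ``large monodromy'' with a Tannakian condition, then separate openness from density---is coherent, and your \emph{density} step ultimately rests on the same deep input the paper uses: under HN-irreducibility a generic filtration on the fixed isocrystal gives a crystalline representation whose image contains an open subgroup of $G^{\mathrm{der}}$ (this is exactly the theorem of Chen \cite{CH14} and Gleason--Lim--Xu \cite{GLX} that the paper invokes). So there is no disagreement on the hard analytic core.

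The genuine gap is in your \emph{openness} argument. You assert that
\[
Z_0=\{\mathrm{Fil}\in\mathrm{Fl}(G,\mu):H_{\mathrm{Fil}}\not\supseteq G^{\mathrm{der}}\}
\]
is Zariski closed ``by the standard semicontinuity of monodromy''. But the argument you sketch---for each $\varphi$-stable $W\subset\D^{\otimes}$, the condition $t_H(W,\mathrm{Fil})=t_N(W)$ is closed in $\mathrm{Fil}$---only gives $Z_0$ as a \emph{union}, over the (in general infinite) family of $\varphi$-stable $W$ not fixed by $G^{\mathrm{der}}$, of closed subsets. There is no reason for such a countable union to be Zariski closed, and you give no finiteness or constructibility statement to control it. (Already for an isocrystal with two distinct slopes there is a continuous family of sub-isocrystals.) The companion equivalence you state---``image contains an open subgroup of $G^{\mathrm{der}}$'' $\Leftrightarrow$ ``Zariski closure contains $G^{\mathrm{der}}$''---also needs more than a one-line justification: one must rule out that the Galois image is a proper positive-dimensional $p$-adic Lie subgroup whose Zariski closure nevertheless contains $G^{\mathrm{der}}$, which requires an argument with Lie algebras and the $\mathrm{Ad}$-action together with non-triviality of the Hodge--Tate weights.

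The paper avoids this entirely. Openness is obtained not on the flag variety but \emph{directly in the $p$-adic topology of $\mathcal{S}^{K,x}$}: using Breuil's classification of finite flat $p$-group schemes, one shows that if two lifts agree modulo $p^{m}$ then their $p^{m-1}$-torsion subgroup schemes over $\mathcal{O}_K$ are isomorphic (Lemma~\ref{breuil lemma}); combined with a $p$-adic Lie-group argument showing that the $p$-power map $H_m/H_{m+1}\to H_{m+1}/H_{m+2}$ is surjective (Proposition~\ref{open neighborhoods}), this yields that the large-monodromy locus is a union of explicit congruence balls $U_m$. Density is then established via the Rapoport--Zink uniformization, the connectedness of the admissible locus (Lemma~\ref{connectedness}), and an approximation by generic points (Lemma~\ref{approximation}). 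So the paper's proof of openness is integral and group-scheme-theoretic, orthogonal to your semicontinuity heuristic; if you want to pursue your route, the substantive missing step is a proof that $Z_0$ is genuinely closed (or at least nowhere $p$-adically dense), which is not standard.
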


This generalizes the work of Maulik and Poonen in the PEL case. We note that their proof use ``$p$-adic Lefschetz $(1,1)$ theorem'' of Berthelot and Ogus, combined with p-adic analysis. Our approach is completely different; the key ingredients of our proof are the classification of finite flat group schemes by Breuil \cite{BR00} and uniformization theorems of Hodge type Shimura varieties. 

Also, we note that, we cannot prove the entire conjecture using the current strategy. Indeed, a crucial step towards a complete proof of Conjecture \ref{conj} is to prove that Theorem \ref{padic} holds not only for deformations over a fixed finite extension of $\breve{E}$, but for all the $\C_p$-deformations. Define the \textit{locus of small monodromy} as the set of $\mathbb{C}_p$ points that are algebraic points without large monodromy, or transcendental but are limit points of algebraic points with small monodromy. In \cite[Lemma 2.2.1]{kis}, Kisin proved that the Galois action on the first $p$-adic \'etale cohomology groups of the deformations factor through $G(\mathbb{Q}_p)$. Therefore, if $x \in \mathcal{S}h(\C_{p})$ is in a proper Shimura subvariety of $\mathcal{S}h$, then it belongs to the small monodromy locus. If one can prove that the locus of large monodromy on $\mathcal{S}h(\C_p)$ is open dense, then the conjecture in the case where $Z$ is a Shimura subvariety follows. A $\C_p$-neighborhood of an algebraic point with large monodromy may contain infinitely many points defined over field extensions with arbitrary large wild ramifications; this leads to the obstruction to the generalization of Theorem \ref{padic} to $\C_p$ points. In particular, suppose that the monodromy group of $y \in \mathcal{S}^{K,x}$ contains an open subgroup of $G^{der}$, say $H_n$(see Proposition \ref{open neighborhoods} for a definition), and let $z \in \mathcal{S}^{K',k}$ be a point defined over a wild ramified extension $K'/K$ such that $y \equiv z \bmod p^{n+1}$. Applying Theorem \ref{open neighborhoods} to $y'\coloneq y \times_K K'$ , the monodromy of $y'$ will ``shrink'', so does the open neighborhood of large monodromy. Therefore, we need to impose some kind of ``minimality'' condition on the wild ramification of the field of definition of those algebraic points, which is not possible even in the $p$-power Hecke orbit of a point. As a corollary, we can say a word about the prime-to-$p$ Hecke orbit of a point. 

\begin{cor}
 Let $x \in \mathcal{S}h_{\C_p} $ be a point with good reduction and let $A_x[p^{\infty}]$ be $p$-divisible group associated with $\bar{x}$ where the condition on the polygons is satisfied. Then $\mathcal{H}^{(p)}(x)$ is $p$-adic nowhere dense. \end{cor}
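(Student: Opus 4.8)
The plan is to derive the $p$-adic nowhere density of $\mathcal{H}^{(p)}(x)$ from Theorem \ref{padic} by a contrapositive argument: a point whose prime-to-$p$ Hecke orbit accumulates at it cannot have large monodromy, yet Theorem \ref{padic} shows such small-monodromy points are rare. Concretely, suppose for contradiction that $\mathcal{H}^{(p)}(x)$ is dense in some nonempty $p$-adic open $U \subset \mathcal{S}h(\C_p)$. After shrinking $U$ we may assume $U$ lies in the residue tube of a single mod $p$ point $\bar{x}$, and since prime-to-$p$ isogenies preserve the $p$-divisible group up to isomorphism (they induce isomorphisms on $p$-divisible groups), every point of $\mathcal{H}^{(p)}(x) \cap U$ reduces to a point whose $p$-divisible group is isomorphic to $A_x[p^\infty]$; that is, $\mathcal{H}^{(p)}(x) \cap U$ is contained in the $\C_p$-analogue of the locus $\mathcal{S}^{K,x}$ running over all finite $K/\breve{E}$. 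The first step is therefore to set up this containment carefully, using the good-reduction hypothesis and the HN-irreducibility assumption on the polygons attached to $A_x[p^\infty]$.

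The second step is the monodromy input. Each $y \in \mathcal{H}^{(p)}(x)$ is linked to $x$ by a prime-to-$p$ isogeny, which is an isomorphism on rational Tate modules at $p$ compatible with Galois; hence the image of $\rho_y$ is conjugate (inside $G(\Q_p)$, using Kisin's Lemma 2.2.1 cited in the excerpt) to the image of $\rho_x$. So either all points of $\mathcal{H}^{(p)}(x)$ have large monodromy or none do. If none do, then $\mathcal{H}^{(p)}(x) \cap U$ lies in the complement of $\mathcal{S}^{K,x}_{\max}$ inside each finite-level piece; but the complement of an open dense set is nowhere dense, and a countable union of the nowhere dense sets $\mathcal{S}^{K,x} \setminus \mathcal{S}^{K,x}_{\max}$ (as $K$ ranges over the countably many finite extensions of $\breve{E}$ inside $\C_p$ up to isomorphism) cannot be dense in $U$ by a Baire-category argument on the locally compact space $U$ --- contradiction. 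If instead all points of $\mathcal{H}^{(p)}(x)$ have large monodromy, one argues that $x$ itself then has large monodromy, and one reaches a contradiction with density from the other direction: the large-monodromy points accumulating at $x$ would force, via Proposition \ref{open neighborhoods}, the open neighborhoods of large monodromy to be compatible, whereas a Hecke orbit within a fixed tube is a proper subvariety image of lower dimension, so cannot be dense. I would organize this so the ``none have large monodromy'' case is the main line and the other case is dispatched quickly.

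The main obstacle I anticipate is the passage from the finite-level statement of Theorem \ref{padic}, valid over each fixed finite extension $K$ of $\breve{E}$, to a statement over $\C_p$ that is strong enough to run the Baire argument --- this is exactly the difficulty the introduction flags as the reason the full conjecture is out of reach. The resolution for the \emph{prime-to-$p$} orbit specifically is that points in $\mathcal{H}^{(p)}(x)$ are automatically defined over extensions with controlled (indeed, tame at worst beyond a fixed bound) ramification, because a prime-to-$p$ isogeny does not introduce new wild ramification into the field of definition of the abelian variety relative to that of $x$; thus $\mathcal{H}^{(p)}(x)$ meets only finitely many, or at worst a controlled countable family of, the strata $\mathcal{S}^{K,x}$, and the wild-ramification pathology described after Theorem \ref{padic} (where $\mathcal{S}^{K',x}$ for wildly ramified $K'$ has shrunken large-monodromy locus) simply does not occur. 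Making this ramification bound precise --- identifying the finite set of relevant $K$ and invoking Theorem \ref{padic} on each --- is the technical heart of the argument; once it is in place, the nowhere density is a short topological deduction.
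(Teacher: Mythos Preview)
The paper does not supply an explicit proof of this corollary; it is presented as a consequence of the discussion explaining that the wild-ramification obstruction to passing from Theorem~\ref{padic} to $\C_p$-points disappears for prime-to-$p$ orbits. Your proposal correctly isolates the two facts that drive this: a prime-to-$p$ isogeny is an isomorphism on $p$-divisible groups (so it preserves the $p$-adic Galois image up to conjugacy), and it introduces at most extensions unramified at $p$, hence---since $K \supset \breve{E}$ already has algebraically closed residue field---the entire orbit $\mathcal{H}^{(p)}(x)$ is defined over the \emph{same} finite extension $K$. This matches the paper's intended mechanism.

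There are, however, two genuine gaps. First, the Baire step is false as written: a countable union of nowhere dense sets can certainly be dense (e.g.\ $\Q$ in $\R$); Baire only prevents such a union from being the whole space, not from being dense. You also claim $U \subset \mathcal{S}h(\C_p)$ is locally compact, but $\C_p$ is not locally compact. Your later reduction to a single $K$ makes Baire unnecessary anyway---but then you should notice that once $\mathcal{H}^{(p)}(x) \subset \mathcal{S}h(K)$ for one fixed finite $K$, nowhere density in $\mathcal{S}h(\C_p)$ is immediate (the $K$-points form a closed nowhere dense subset of the $\C_p$-points of any positive-dimensional variety), and Theorem~\ref{padic} is only needed if one wants the finer statement of nowhere density \emph{within} $\mathcal{S}^{K,\bar{x}}$.

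Second, the ``all have large monodromy'' branch is not handled. The orbit $\mathcal{H}^{(p)}(x)$ is a countable set of points, not a subvariety, and countability alone does not give $p$-adic nowhere density; your dimension-count sketch does not apply. The observation that repairs this case is stronger than ``same monodromy'': every $y \in \mathcal{H}^{(p)}(x)$ has $A_y[p^\infty]$ isomorphic to $A_x[p^\infty]$ as a $p$-divisible group over $\mathcal{O}_K$, so via Serre--Tate (equivalently, via the period map $\breve{\pi}$, which is \'etale and depends only on the $p$-divisible group) the lifts of such $y$ in a fixed tube form a discrete set. A discrete subset of a positive-dimensional rigid space is nowhere dense, and this handles both monodromy cases uniformly.
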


\section{A congruence condition for large monodromy}

The goal of this section is to prove a sufficient condition for the $p$-adic Galois associated to a fixed lift of a mod p point having a large monodromy to have a (p-adic) neighborhood of lifts with a large monodromy. The Serre-Tate theorem states that the deformation theory of an abelian variety is completely determined by the deformation theory of its associated p-divisible group, meaning that deforming an abelian variety is equivalent to deforming its p-divisible group. Therefore, we do this by studying the corresponding $p$-divisible groups. The main result of this section is the following. 

\begin{thm}\label{congruence condition}
	Let $\mathcal{G}^{\prime} \in \mathcal{S}^{K,x}$ be the p-divisible group associated to a moduli point $y \in \mathcal{S}^{K,x}$. Suppose that the Galois image of $\mathcal{G}^{\prime} \in \mathcal{S}^{K}$ contains a $p$-adic open subgroup of $G^{der}$ in the form $H_n=G^{der}(\Z_p)\cap K_n$, where
    $$K_n=\{M \in \Sp_{2g}(\Z_{p}) \mid M \equiv I \text{ mod } p^{n} \}$$ for some fixed $n \ge 1$. Then for all $m \ge n+2$, the $p$-adic neighborhoods of $G^{\prime}$
	$$U_{m}:=\{\G \in \mathcal{S}^{K,x} \mid \G \equiv \G^{\prime} \text{ mod } p^{m}\}$$
 consists of $p$-divisible groups with large monodromy.
\end{thm}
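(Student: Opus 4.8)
The plan is to compare the $p$-adic Galois representations attached to $\mathcal{G}'$ and to a nearby $p$-divisible group $\mathcal{G} \in U_m$ via the classification of finite flat group schemes over $\mathcal{O}_K$ due to Breuil (and its refinements by Kisin), which is already cited as a key ingredient. First I would reduce the statement about $\mathcal{G}$ and $\mathcal{G}'$ to a statement about their $p^{m}$-torsion finite flat group schemes $\mathcal{G}[p^{m}]$ and $\mathcal{G}'[p^{m}]$: the condition $\mathcal{G} \equiv \mathcal{G}' \bmod p^{m}$ should be unpacked to mean that these truncated Barsotti--Tate groups, together with their crystalline/Breuil module data, agree over $\mathcal{O}_K/p^{m}$. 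The Breuil module of a finite flat group scheme killed by $p^{m}$ depends only on the base ring mod a power of $p$ controlled by the ramification index $e = e(K/\breve{E})$ and by $m$, so for $m$ sufficiently large relative to $n$ (this is the source of the bound $m \ge n+2$) one gets an isomorphism $\mathcal{G}[p^{m-c}] \cong \mathcal{G}'[p^{m-c}]$ as finite flat group schemes over $\mathcal{O}_K$ for an explicit constant $c$ depending on the ramification.

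**Transferring the Galois image.** Given such an isomorphism of $p^{m-c}$-torsion group schemes over $\mathcal{O}_K$, the induced isomorphism on generic fibers is $\mathrm{Gal}(\bar{\breve E}/K)$-equivariant, so the images of $\rho_{\mathcal{G}}$ and $\rho_{\mathcal{G}'}$ in $\mathrm{GL}(T_p/p^{m-c})$ coincide. The next step is a group-theoretic lifting argument: the hypothesis says $\mathrm{Im}(\rho_{\mathcal{G}'})$ contains $H_n = G^{der}(\Z_p) \cap K_n$ with $K_n$ the principal congruence subgroup mod $p^n$ of $\mathrm{Sp}_{2g}(\Z_p)$. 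I would invoke the standard fact that a closed subgroup of $\mathrm{Sp}_{2g}(\Z_p)$ (or of $G^{der}(\Z_p)$, using that $G$ is unramified at $p$ so $G^{der}(\Z_p)$ is a nice $p$-adic analytic group with the congruence filtration $H_1 \supset H_2 \supset \cdots$) that surjects onto $H_n/H_{m-c}$ modulo $p^{m-c}$ must in fact contain $H_{m-c}$ entirely once $m - c \ge n + 1$ — this is the usual ``closed subgroup topologically generated by a congruence layer'' / Frattini-type argument for uniform pro-$p$ groups, valid for $p \ge 3$. Since $\mathrm{Im}(\rho_{\mathcal{G}})$ has the same image mod $p^{m-c}$ as $\mathrm{Im}(\rho_{\mathcal{G}'})$, it contains a set of elements reducing to generators of $H_n/H_{m-c}$; being closed, it then contains $H_{m-c} \cap$ (preimage of those generators), hence contains an open subgroup $H_N$ of $G^{der}$ for $N$ slightly larger than $n$. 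Thus $\mathcal{G}$ has large monodromy in the sense of Definition \ref{defnlarge monodromy}, which is what is claimed for every $\mathcal{G} \in U_m$.

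**The main obstacle.** The delicate point is making precise the dictionary ``$\mathcal{G} \equiv \mathcal{G}' \bmod p^m$ as deformations $\Rightarrow$ $\mathcal{G}[p^{m-c}] \cong \mathcal{G}'[p^{m-c}]$ as finite flat group schemes over $\mathcal{O}_K$'' with a usable constant $c$. Breuil's classification is an equivalence between finite flat group schemes over $\mathcal{O}_K$ killed by a power of $p$ and certain filtered modules over $S = W[[u]]$ (with divided powers, $u \mapsto \pi$ a uniformizer), and a congruence of deformations mod $p^m$ translates to a congruence of the associated Breuil modules over $S/p^m S$ — but one must track how the functor's faithfulness degrades under reduction, i.e. control the torsion in $\mathrm{Hom}$ and $\mathrm{Ext}$ groups of Breuil modules, which is where the correction term $c \sim e/(p-1)$ and the precise threshold $m \ge n+2$ enter. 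I would handle this by working with Kisin modules / $\mathfrak{S}$-modules where the integrality statements are cleaner, citing \cite{BR00} for the core equivalence, and verifying that for $p \ge 3$ and $m \ge n+2$ the loss is at most one congruence level so that the surjectivity onto $H_n/H_{m-1}$ survives. The second, more routine obstacle is the $p$-adic group theory: one needs $G^{der}(\Z_p) \cap K_1$ to be a uniform (or at least saturated, powerful) pro-$p$ group so that the layers $H_i/H_{i+1}$ behave like a graded Lie algebra and a generating set mod $H_{i+1}$ lifts to topological generators; this holds because $G$ is unramified and $p \ge 3$, and I would cite the standard structure theory (e.g. the analogue for $\mathrm{Sp}_{2g}$ together with the fact that $G^{der}$ embeds as a closed subgroup compatibly with the congruence filtrations).
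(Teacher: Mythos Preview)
Your two-step outline (Breuil classification to pass from a congruence of $p$-divisible groups to an isomorphism of truncated group schemes, then a pro-$p$ Frattini/$p$-power argument on the congruence filtration of $G^{\mathrm{der}}(\Z_p)$) is exactly the structure of the paper's proof, which packages these as Lemma~\ref{breuil lemma} and Proposition~\ref{open neighborhoods}. The one place your sketch diverges is the treatment of the constant $c$: you hedge with a ramification-dependent $c\sim e/(p-1)$ and propose switching to Kisin $\mathfrak{S}$-modules, whereas the paper works directly with Breuil's functor and observes that for $G$ killed by $p^n$ the pair $(\mathcal{M},\mathrm{Fil}^1\mathcal{M})$ already depends only on $G\bmod p$ (via \cite[Cor.~3.2.2]{BR00}), while the divided Frobenius $\phi_1=\phi/p$ requires a lift to $\mathcal{O}_K/p^{n+1}$ before dividing by $p$ --- so $c=1$ on the nose, independent of $e$, and this is precisely what forces $m\ge n+2$. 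Your group-theoretic step is the same argument as the paper's (the $p$-power map $H_m/H_{m+1}\to H_{m+1}/H_{m+2}$ is an isomorphism via the exponential, so a closed subgroup hitting $H_n/H_{n+1}$ contains some $H_m$), phrased in the equivalent language of uniform pro-$p$ groups.
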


\subsection{Classification of finite flat group schemes over $p$-adic rings}\label{section classifiction}
 Our first step is to study the $p$-divisible groups attached to the $\ok$-lifts and determine conditions under which their $p^n$-torsions coincide as group schemes over $\ok$. We give a brief introduction to the theory of Breuil modules and the resulting classification of finite flat $p$-groups, which we will use in the proof of Theorem \ref{congruence condition}. For further details, we refer the reader to Chapters 2 and 4 of \cite{BR00}.
 
We fix some conventions that will be used later. Let $p$ be a prime number, $k$ be a perfect field of characteristic $p > 0$ and $W = W (k)$ be the ring of Witt vectors with coefficients in $k$. Let $K_{0}=Frac(W)$ and let $K$ be a totally ramified finite extension of $K_{0}$ of degree $e \ge 1$. Let $\mathcal{O}_{K}$ be the ring of integers of $K$ and let $\pi $ be a uniformizer of $\mathcal{O}_{K}$. Let $u$ be an indeterminate, $S$ be the p-adic completion of the envelope to the divided powers of $W[u]$ with respect to the ideal generated by the minimal polynomial of $\pi$ over $K_{0}$ and Fil$^{1}S$ be the p-adic completion of this ideal with divided powers. 

\vspace{1em}
We call a group scheme a \textit{p-group}, if it is a commutative group killed by a power of $p$. Denote by $\left(p-Gr / \mathcal{O}_K\right)$ the category of finite and flat $p$-groups over $\operatorname{Spec}\left(\mathcal{O}_K\right)$. Let $\operatorname{Spf}\left(\mathcal{O}_K\right)_{s y n}$ be the category of $p$-adic syntomatic formal schemes on $\operatorname{Spf}\left(\mathcal{O}_K\right)$ equipped with the Grothendieck topology generated by the surjective families of syntomic morphisms. Define $\left(Ab/ \mathcal{O}_K\right)$ as the abelian category of commutative group schemes over $\operatorname{Spf}\left(\mathcal{O}_K\right)_{s y n}$.

 The classical crystalline Dieudonn\'e theory gives a classification of finite flat group schemes  over $\mathcal{O}_K $. In \cite{BR00}, Breuil introduced the concept of strongly divisible modules and gave a classification of finite flat group schemes killed by $p$ for any $p \ge 3$, in terms of filtered modules with an action by the divided Frobenous \cite[Theorem 1.2]{BR00}. This classification generalizes to finite flat $p$-groups by associating to such group schemes certain tractable and concrete linear algebraic objects $(\mathcal{M} , \operatorname{Fil}^{1} \mathcal{M} , \phi_{1} )$.

\begin{thm}{\cite[Theorem 1.3]{BR00}}\label{Breuil}
Suppose $p \ge 3$, there is an anti-equivalence of categories between the category of finite and flat $p$-groups on $\OK$ whose kernel of the multiplication by $p^{n}$ is still flat for all $n$ and the category of objects $(\mathcal{M} , \operatorname{Fil}^{1} \mathcal{M} , \phi_{1})$ where $\M$ is an $S$-module of the form  $\oplus_{i \in I}S/p^{n_{i}}S$ for finite $I$ and $n_{i} \in \N^{+}$, $\operatorname{Fil}^{1}\M$ a sub $S$-module containing $\operatorname{Fil}^{1}S\cdot \M$ and $\phi_{1}$ a semi-linear mapping from $\operatorname{Fil}^{1}\M$ to $\M$ whose image generates $\M$ over $S$.
\end{thm}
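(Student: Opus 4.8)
The plan is to reconstruct the strategy of \cite{BR00}: build a contravariant crystalline Dieudonn\'e functor evaluated over the divided-power ring $S$, prove it is an anti-equivalence first on $p$-divisible groups over $\OK$, and then descend to finite flat $p$-groups by resolving each one as the kernel of an isogeny of $p$-divisible groups. \textbf{Construction of the functor.} For a $p$-divisible group $H$ over $\OK$, I would evaluate its crystalline Dieudonn\'e crystal $\D(H)$ on the PD-thickening $S \twoheadrightarrow \OK$, $u \mapsto \pi$, to obtain a finite free $S$-module $\M(H) := \D(H)_S$; the Frobenius $\phi\colon u \mapsto u^p$ of $S$ and the Frobenius of the crystal induce $\phi\colon \M(H) \to \M(H)$. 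Over the quotient $\OK$, the module $\D(H)_{\OK}$ carries the Hodge filtration $\omega_H \subset \D(H)_{\OK}$; define $\operatorname{Fil}^1\M(H)$ to be its preimage in $\M(H)$, which then automatically contains $\operatorname{Fil}^1 S\cdot \M(H)$. Since $p \in \operatorname{Fil}^1 S$ and $\phi$ is an isogeny on $\D(H)$, $\phi$ is divisible by $p$ on $\operatorname{Fil}^1$, so $\phi_1 := p^{-1}\phi$ makes sense on $\operatorname{Fil}^1\M(H)$ and its image generates $\M(H)$ over $S$; one checks $(\M(H), \operatorname{Fil}^1\M(H), \phi_1)$ is a free strongly divisible module. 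The same datum is produced intrinsically from the syntomic realization of $H$ on $\operatorname{Spf}(\OK)_{syn}$ recalled above.

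\textbf{Anti-equivalence on $p$-divisible groups.} I would then prove $H \mapsto \M(H)$ is an anti-equivalence onto free strongly divisible $S$-modules. Faithfulness reduces, after inverting $p$ and using Fontaine's crystalline comparison, to Tate's theorem that a $p$-divisible group over $\OK$ is determined by its generic-fibre Galois representation. Fullness follows by tracking integral lattices in the same comparison, using that $(\operatorname{Fil}^1\M(H), \phi_1)$ recovers $\D(H)$ together with its Hodge filtration over $\OK$ (Grothendieck--Messing), the divided powers of $S$ making the crystal rigid enough to be reconstructed by Taylor expansion. Essential surjectivity is the serious point: from an abstract free strongly divisible $\M$ one produces $H$ by reducing mod $p$ to a Dieudonn\'e module over $k$, hence a $p$-divisible group over $k$, and observing that $(\operatorname{Fil}^1, \phi_1)$ is precisely the Grothendieck--Messing datum lifting it over $\OK$; equivalently one attaches to $\M$ a crystalline $\Z_p$-lattice with Hodge--Tate weights in $\{0,1\}$ and invokes the converse that such lattices arise from $p$-divisible groups (valid for $p \ge 3$ with no restriction on the ramification index $e$).

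\textbf{Descent to finite flat $p$-groups.} Let $G$ be finite flat over $\OK$, killed by $p^N$, with $\Ker(p^n\colon G \to G)$ flat for all $n$. I would first realize $G$ as the kernel of an isogeny of $p$-divisible groups over $\OK$ — it fits into a short exact sequence $0 \to G \to H_1 \to H_2 \to 0$ with $H_1, H_2$ $p$-divisible (for instance take $H_1$ the $p$-divisible group of an abelian scheme over the complete local ring $\OK$ containing $G$, and $H_2 := H_1/G$) — and here the flatness of the $\Ker(p^n)$ is what makes the resulting complex of Dieudonn\'e crystals strict. Applying $\D$ gives a short exact sequence $0 \to \M(H_2) \to \M(H_1) \to \M(G) \to 0$; one defines $\M(G)$ as this cokernel, with $\operatorname{Fil}^1$ and $\phi_1$ induced, checks by d\'evissage from the case $H[p^n]$ that it has the stated form $\oplus_{i \in I} S/p^{n_i}S$, and checks independence of the resolution since any two are dominated by a common one and the previous step applies. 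Full faithfulness of $G \mapsto \M(G)$ then follows from the $p$-divisible case by the snake lemma, and for essential surjectivity one resolves a given $(\M, \operatorname{Fil}^1, \phi_1)$ by a two-term complex of free strongly divisible modules, realizes it as $\D$ of an isogeny $H_1 \to H_2$, and takes the kernel.

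\textbf{Main obstacle.} The crux is essential surjectivity in the $p$-divisible case — manufacturing a genuine $p$-divisible group from an abstract linear-algebra object over the non-Noetherian PD-ring $S$, with \emph{no} bound on the ramification $e$ — which is exactly where $p \ge 3$ is needed. The subsidiary difficulty is verifying in the descent step that kernel/cokernel formation is compatible with $\D$ and outputs a torsion module of the precise shape $\oplus_{i \in I} S/p^{n_i}S$ carrying a $\operatorname{Fil}^1$ that contains $\operatorname{Fil}^1 S \cdot \M$ and whose $\phi_1$-image generates $\M$; this is precisely the role of the hypothesis that $\Ker(p^n\colon G \to G)$ stays flat.
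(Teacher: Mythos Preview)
The paper does not supply its own proof of this theorem: it is quoted as \cite[Theorem 1.3]{BR00}, and the text immediately following it only records the concrete description of the functor $\operatorname{Mod}(G)$ via $\mathcal{O}_{\infty,\pi}^{\mathrm{cris}}$ and $\mathcal{J}_{\infty,\pi}^{\mathrm{cris}}$, then remarks that the statement is deduced from \cite[Theorem 4.2.1.6]{BR00}. So there is nothing in the paper to compare your argument against beyond the construction of the functor itself; your proposal goes well beyond what the paper does.

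That said, a word on your sketch versus what actually happens in \cite{BR00}. Breuil does not first establish the anti-equivalence on $p$-divisible groups and then descend by resolving an arbitrary $G$ as the kernel of an isogeny; rather, he works directly with finite flat groups, proving the case of groups killed by $p$ by explicit syntomic computations (his Theorem 1.2) and then climbing by d\'evissage along the filtration $G[p]\subset G[p^2]\subset\cdots$, which is exactly where the flatness of $\Ker(p^n)$ enters. Your route --- $p$-divisible groups first via Grothendieck--Messing and crystalline comparison, then resolve torsion objects --- is closer in spirit to later treatments (e.g.\ Kisin's reformulation over $\mathfrak{S}$) and is viable, but two steps you pass over are nontrivial: embedding an arbitrary such $G$ into a $p$-divisible group over $\OK$ requires Raynaud's theorem on embedding into abelian schemes, and on the linear-algebra side showing that every torsion object of the target category lifts to a surjection of free strongly divisible modules is itself a lemma that needs proof. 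Neither is a fatal gap, but they are the places where real work hides.
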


Concretely, there is a filtered object associate to any group $G$ of $(p-Gr/\OK)$
$\operatorname{Mod(G)}$ defined by:
     \begin{itemize}
     \item
     	$\mathcal{M}:=\operatorname{Mod}(G)=\operatorname{Hom}_{\left(\mathrm{Ab} / \mathcal{O}_{K}\right)}\left(G, \mathcal{O}_{\infty, \pi}^{\mathrm{cris}}\right)$
     	
     	\item
$\operatorname{Fil}^1 \mathcal{M}:=\operatorname{Fil}^{1} \operatorname{Mod}(G)=\operatorname{Hom}_{\left(\mathrm{Ab} / \mathcal{O}_{K}\right)}\left(G, \mathcal{J}_{\infty, \pi}^{\text {cris }}\right)$
\item

$\phi_{1}: \operatorname{Fil}^{1} \mathcal{M} \rightarrow \mathcal{M} \text { is induced by } \phi_{1}: \mathcal{J}_{\infty, \pi}^{\text{cris}} \rightarrow \mathcal{O}_{\infty, \pi}^{\text{cris}}$.

     \end{itemize}

  By construction, $\operatorname{Mod}(G)$ is in the category $'(\operatorname{Mod}/S)$ and the morphisms in $'(\operatorname{Mod}/S)$ are the $S$-linear morphisms that preserve $\operatorname{Fil}^1 \mathcal{M}$ and commute with $\phi_1$ (\cite[2.1.1]{BR00}). Moreover, we can restrict the functor $\operatorname{Mod}$ to the full subcategory of $(p-Gr/\OK)$ of group schemes whose kernel of the multiplication by $p^{n}$ map is flat for all $n$, and deduce Theorem \ref{Breuil} from \cite[Theorem 4.2.1.6]{BR00}. We will use this concrete description of $\operatorname{Mod}(G)$ later in our proof.

\begin{remark}
 Recall that $([\mathrm{BBM}, 3.3.8])$ a \textit{truncated Barsotti-Tate group of level} $n \geq 2$ is an object of $\left(p-G r / \mathcal{O}_K\right)$ killed by $p^n$ such that for all $r \in\{0, \ldots, n\}$, $$ \operatorname{Ker}\left(p^{n-r}: G \rightarrow G\right)=\Im (p^r:G \to G).$$ For example, if $A$ is an abelian variety over $\mathcal{O}_K$, we have $ A\left[p^n\right]=\operatorname{Ker}(p^n: A \rightarrow A)$ which is a truncated Barsotti-Tate group of level $n$ for all $n \leq 2$. This implies that the kernel of the multiplication by $p^r$ on $G$ is flat for all $r$ and that the bi-algebra of $G$ is a free $\mathcal{O}_K$-module of rank $p^{nd}$ for the height $d$ of $G$, so that Theorem \ref{Breuil} applies.
\end{remark}

\subsection{The sheaf $\mathcal{O}^{cris}_{\infty, \pi}$ and $\mathcal{J}^{cris}_{\infty , \pi}$}\label{section crystalline sheaves}
Although we do not delve into the technical details of the proof of Theorem \ref{Breuil}, in this section, we recall some necessary results on the properties of sheaves on the syntomic site of $\operatorname{Spec}\left(\mathcal{O}_K\right)$. For further details, we refer the reader to \cite[Section 2.3]{BR00}.

For a fixed $n$, we denote $S_n=S / p^n$ and $E_{n}=\operatorname{Spec}\left(S_{n}\right)$. Let $ \operatorname{Spec}\left(\mathcal{O}_{K} / p^{n}\right) \hookrightarrow E_{n}$ be the divided power thickening of $\operatorname{Spec}\left(\mathcal{O}_K/ p^{n}\right)$. Let $\mathfrak{X}$ be a syntomic p-adic formal scheme over $\operatorname{Spf}\left(\mathcal{O}_K\right)$. Let $X$ be a scheme over $\operatorname{Spec}\left(\mathcal{O}_{K} / p^{n}\right)$. One can define the small crystalline site $\left(X / E_{n }\right)_{\text {cris }}$ and the sheaves $\mathcal{J}_{X / E_{n}} \subset \mathcal{O}_{X / E_{n}}$ on it, as in \cite[III.1.1]{Be}.

One defines presheaves $\mathcal{J}_{n, \pi}^{\text{cris}} \subset \mathcal{O}_{n, \pi}^{\text {cris }}$ on $\operatorname{Spf}\left(\mathcal{O}_{K}\right)_{\text{syn}}$ by letting:

\begin{align}
\mathcal{O}_{n, \pi}^{\text{cris}}(\mathfrak{X}) &=H_{\text{cris}}^{0}\left(\left(\mathfrak{X}_{n} / E_{n}\right)_{\text{cris}}, \mathcal{O}_{\mathfrak{X}_{n}/ E_{n}} \right) \\
\mathcal{J}_{n, \pi}^{\text{cris}}(\mathfrak{X}) &=H_{\text{cris}}^{0}\left(\left(\mathfrak{X}_{n} / E_{n}\right)_{\text{cris}}, \mathcal{J}_{\mathfrak{X}_{n} / E_{n}}\right)
\end{align}

where $\mathfrak{X}_{n}=\mathfrak{X} \times \mathbb{Z} / p^{n} \mathbb{Z}$. The argument in \cite[Section 1.3]{FM} can be applied to show that these are sheaves on $\operatorname{Spf}\left(\mathcal{O}_{K}\right)_{\text{syn}}$. Among the results, we point out the following significant lemma.

\begin{lem}{\cite[Corollary 2.3.3]{BR00}}\label{Breuil 2.3.3}
	The sheaves $\mathcal{O}_{n, \pi}^{\text {cris }}$ and $\mathfrak{J}_{n, \pi}^{\text {cris }}$ are flat on $W_{n }$ and we have exact sequences in $\left(A b / \mathcal{O}_{K}\right): 0 \rightarrow \mathcal{O}_{i, \pi}^{cris} \stackrel {p^{n}}{\rightarrow} \mathcal{O}_{n+i, \pi}^{c r i s} \rightarrow \mathcal{O}_{n, \pi}^{\text {cris } } \rightarrow 0$ and $0 \rightarrow \mathfrak{J}_{i, \pi}^{\text{cris }} \stackrel{p^{n}}{\rightarrow}$ $\mathfrak{J}_{n+i, \pi}^{ c r i s} \rightarrow \mathfrak{J}_{n, \pi}^{c r i s} \rightarrow 0.$

\end{lem}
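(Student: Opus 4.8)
The plan is to reduce both assertions to statements about sections over a generating family of the syntomic topology, where $\mathcal{O}^{\mathrm{cris}}_{n,\pi}$ and $\mathcal{J}^{\mathrm{cris}}_{n,\pi}$ become completely explicit via divided-power envelopes, then to propagate flatness from $S_n$ down to $W_n$ and read off the exact sequences from the analogous sequence for $S$. The one structural input about $S$ I would use is that it is flat --- indeed $p$-torsion free --- over $W$ (\cite[Section~2.2]{BR00}); hence $S_n := S/p^nS$ is flat over $W_n$, the sequence $0 \to S_i \xrightarrow{p^n} S_{n+i} \to S_n \to 0$ is exact, and $E_n \cong E_{n+i}\times_{\operatorname{Spec}(W_{n+i})}\operatorname{Spec}(W_n)$ as divided-power schemes over $\operatorname{Spec}(W)$.

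First I would reduce to a local model. Since $W_n$-flatness of a sheaf of $W_n$-modules and exactness of a three-term sequence of abelian sheaves can be tested on sections over objects generating the topology, it suffices to treat an affine syntomic $\mathfrak{X} = \operatorname{Spf}(R)$ with $R = \mathcal{O}_K\{t_1,\dots,t_d\}/(f_1,\dots,f_c)$, the $f_i$ a regular sequence and $R$ flat over $\mathcal{O}_K$ (the meaning of ``syntomic''). For each $m$, reducing modulo $p^m$ and sending $u\mapsto\pi$, $t_i\mapsto t_i$ realises $\mathfrak{X}_m$ as a regularly immersed closed subscheme of $\operatorname{Spf}(S_m\{t\})$, which is formally smooth over $E_m$, and Berthelot's computation of crystalline cohomology via the de Rham complex of the divided-power envelope gives
\[
\mathcal{O}^{\mathrm{cris}}_{m,\pi}(\mathfrak{X}) = H^0\!\bigl(D_m \otimes_{S_m\{t\}} \Omega^{\bullet}_{S_m\{t\}/E_m}\bigr),\qquad \mathcal{J}^{\mathrm{cris}}_{m,\pi}(\mathfrak{X}) = H^0\!\bigl(\mathcal{J}_{D_m} \otimes_{S_m\{t\}} \Omega^{\bullet}_{S_m\{t\}/E_m}\bigr),
\]
where $D_m$ is the divided-power envelope of $\mathfrak{X}_m$ in $\operatorname{Spf}(S_m\{t\})$ and $\mathcal{J}_{D_m}$ its divided-power ideal. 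By the structure theorem for divided-power envelopes of regular immersions, $D_m$ is free over $S_m\{t\}$ on the divided-power monomials in $c$ variables, $\mathcal{J}_{D_m} = \ker(D_m \twoheadrightarrow R_m)$, and --- since $S_m\{t\}$ is flat over $S_m$, which is flat over $W_m$, and $R_m = R/p^mR$ is flat over $W_m$ --- every term of these two bounded complexes is $W_m$-flat, with the level-$m$ complex obtained from the level-$(n{+}i)$ one by base change along $W_{n+i}\to W_m$.

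Next I would pass from this termwise flatness to flatness of $H^0$. The complexes above compute crystalline cohomology, which vanishes in negative degrees; combined with the flat base-change theorem for crystalline cohomology, which gives $R\Gamma_{\mathrm{cris}}(\mathfrak{X}_{n+i}/E_{n+i})\otimes^L_{W_{n+i}}W_m \simeq R\Gamma_{\mathrm{cris}}(\mathfrak{X}_m/E_m)$ (and the same with $\mathcal{J}$-coefficients), a standard universal-coefficient argument then forces $\operatorname{Tor}^{W_{n+i}}_1\!\bigl(\mathcal{O}^{\mathrm{cris}}_{n+i,\pi}(\mathfrak{X}),k\bigr) = 0$, i.e. $\mathcal{O}^{\mathrm{cris}}_{n+i,\pi}(\mathfrak{X})$ is $W_{n+i}$-flat, and likewise for $\mathcal{J}^{\mathrm{cris}}$; letting $n{+}i$ be large gives the flatness assertion for all $n$.

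Finally I would read off the exact sequences. Tensoring $R\Gamma_{\mathrm{cris}}(\mathfrak{X}_{n+i}/E_{n+i})$ with $0 \to W_i \xrightarrow{p^n} W_{n+i} \to W_n \to 0$ and using crystalline base change yields, functorially in $\mathfrak{X}$, a distinguished triangle $R\Gamma_{\mathrm{cris}}(\mathfrak{X}_i/E_i) \xrightarrow{p^n} R\Gamma_{\mathrm{cris}}(\mathfrak{X}_{n+i}/E_{n+i}) \to R\Gamma_{\mathrm{cris}}(\mathfrak{X}_n/E_n) \xrightarrow{+1}$; its long exact sequence, combined with $H^{-1}_{\mathrm{cris}} = 0$, begins $0 \to \mathcal{O}^{\mathrm{cris}}_{i,\pi}(\mathfrak{X}) \xrightarrow{p^n} \mathcal{O}^{\mathrm{cris}}_{n+i,\pi}(\mathfrak{X}) \to \mathcal{O}^{\mathrm{cris}}_{n,\pi}(\mathfrak{X}) \to H^1_{\mathrm{cris}}(\mathfrak{X}_i/E_i)$, so it remains only to see that the last map vanishes --- equivalently that $H^1_{\mathrm{cris}}(\mathfrak{X}_{n+i}/E_{n+i})$ is $W_{n+i}$-flat --- which in the local model one extracts from the explicit divided-power-monomial presentation of $D_m$ together with the shape of $\Omega^{\bullet}_{S_m\{t\}/E_m}$ (using that $\mathfrak{X}$ has smooth generic fibre). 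Sheafifying the resulting functorial short exact sequence of sections gives the two exact sequences in $(Ab/\mathcal{O}_K)$. I expect the main obstacle to be exactly this local flatness analysis --- extracting flatness of $H^0$, and vanishing of the boundary into $H^1$, from the de Rham complex of the divided-power envelope --- since that is where one genuinely needs both the vanishing of crystalline cohomology in negative degrees and the crystalline flat base-change theorem; the syntomic-local reduction, the sheafification, and the bookkeeping with $p^nW_{n+i}\cong W_i$ are routine.
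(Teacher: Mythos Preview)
The paper does not give a proof of this lemma at all: it is quoted as \cite[Corollary~2.3.3]{BR00} and used as a black box, so there is no ``paper's own proof'' to compare against. Your proposal is therefore attempting strictly more than the paper does.

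On the substance of your proposal: the overall architecture---reduce to a generating family of affine syntomic objects, identify sections with $H^0$ of the de Rham complex of a divided-power envelope, and deduce the exact sequences from the $p$-torsion-freeness of $S$---is the right framework and matches the shape of Breuil's argument. The gap is exactly where you locate it, but it is a genuine gap rather than a routine verification. Your ``standard universal-coefficient argument'' for $W_{n+i}$-flatness of $H^0$ does not go through as written: vanishing of $H^{-1}$ together with derived base change gives you injectivity of multiplication by $p^n$ on $H^0$, but to extract $\operatorname{Tor}_1^{W_{n+i}}(H^0,k)=0$ from the long exact sequence you need control of the boundary into $H^1$, which is precisely what you defer. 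Your proposed route to that control invokes ``$\mathfrak{X}$ has smooth generic fibre,'' which is false for a general syntomic $\mathcal{O}_K$-scheme (syntomic means flat and LCI, not generically smooth), so that step needs a different justification. In Breuil's treatment the difficulty is sidestepped by working with an explicit cofinal family of syntomic covers on which the sections of $\mathcal{O}^{\mathrm{cris}}_{n,\pi}$ and $\mathcal{J}^{\mathrm{cris}}_{n,\pi}$ are written down concretely (this is his Lemma~2.3.2), and flatness together with the short exact sequences is then read off directly without any $H^1$ analysis.
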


\vspace{1em}

We summarize sections \ref{section classifiction} and \ref{section crystalline sheaves} using the following lemma, which classifies the $p$-power torsions of $p$-divisible groups over $\ok$ by their values modulo $p^{n+1}$. 

\begin{lem}\label{breuil lemma}
Let $\mathcal{G}$ and $\mathcal{G}^{\prime}$ be $p$-divisible groups over $\ok$ such that $\mathcal{G} \equiv \mathcal{G}^{\prime} \text{ mod } p^{n+1}$ for some positive integer $n$, let $\mathcal{G}[p^{n}]$ and $\mathcal{G}^{\prime}[p^{n}]$ be the $p^n$ torsion of $\mathcal{G}$ and $\mathcal{G}^{\prime}$, respectively. Then we have $\mathcal{G}[p^{n}] \simeq \mathcal{G}^{\prime}[p^{n}]$ as group schemes over $\ok.$   
\end{lem}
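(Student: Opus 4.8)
The plan is to reduce the statement to an isomorphism of Breuil modules and then exploit the fact that the crystalline sheaves computing these modules are insensitive to the base beyond a controlled precision. Since $\mathcal{G}$ and $\mathcal{G}'$ are $p$-divisible groups, their $p^n$-torsion subgroups $\mathcal{G}[p^n]$ and $\mathcal{G}'[p^n]$ are truncated Barsotti--Tate groups of level $n$ over $\ok$; as in the Remark following Theorem \ref{Breuil}, they are therefore finite flat $p$-groups killed by $p^n$ whose multiplication-by-$p^r$ kernels are flat for every $r$, so Theorem \ref{Breuil} applies to them. By the anti-equivalence of Theorem \ref{Breuil} it thus suffices to produce an isomorphism
\[
\operatorname{Mod}(\mathcal{G}[p^n]) \;\simeq\; \operatorname{Mod}(\mathcal{G}'[p^n])
\]
in $'(\operatorname{Mod}/S)$, i.e.\ an $S$-linear isomorphism carrying $\operatorname{Fil}^1$ to $\operatorname{Fil}^1$ and commuting with $\phi_1$.

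The second step is to observe that for a group scheme $G$ killed by $p^n$ the triple $\operatorname{Mod}(G)=(\mathcal{M},\operatorname{Fil}^1\mathcal{M},\phi_1)$ is already computed at finite level $n$: any morphism in $(\mathrm{Ab}/\ok)$ from the $p^n$-torsion object $G$ to $\mathcal{O}^{\mathrm{cris}}_{\infty,\pi}$ factors through the $p^n$-torsion subsheaf of $\mathcal{O}^{\mathrm{cris}}_{\infty,\pi}$, which by the exact sequences of Lemma \ref{Breuil 2.3.3} is exactly $\mathcal{O}^{\mathrm{cris}}_{n,\pi}$; likewise $\operatorname{Fil}^1\operatorname{Mod}(G)=\operatorname{Hom}_{(\mathrm{Ab}/\ok)}(G,\mathcal{J}^{\mathrm{cris}}_{n,\pi})$, with $\phi_1$ induced by the divided Frobenius $\phi_1\colon \mathcal{J}^{\mathrm{cris}}_{n,\pi}\to\mathcal{O}^{\mathrm{cris}}_{n,\pi}$. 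Now the sheaves $\mathcal{O}^{\mathrm{cris}}_{n,\pi}$ and $\mathcal{J}^{\mathrm{cris}}_{n,\pi}$ are defined by the crystalline cohomology of $\mathfrak{X}_n=\mathfrak{X}\times\mathbb{Z}/p^n\mathbb{Z}$ over the divided power thickening $\operatorname{Spec}(\ok/p^n)\hookrightarrow E_n$, so the underlying filtered $S_n$-module of $\operatorname{Mod}(\mathcal{G}[p^n])$ depends only on $\mathcal{G}[p^n]\otimes_{\ok}\ok/p^n$, hence only on $\mathcal{G}\otimes_{\ok}\ok/p^n$; the remaining datum $\phi_1=\tfrac1p\phi$ requires the Frobenius $\phi$, equivalently the group scheme, to precision $p^{n+1}$. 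Since $\mathcal{G}\equiv\mathcal{G}'\bmod p^{n+1}$, an isomorphism $\mathcal{G}\otimes\ok/p^{n+1}\simeq\mathcal{G}'\otimes\ok/p^{n+1}$ restricts to an isomorphism of $p^n$-torsions over $\ok/p^{n+1}$ and, by functoriality of $\operatorname{Mod}$ together with the previous sentence, induces an isomorphism of the full triples $\operatorname{Mod}(\mathcal{G}[p^n])\simeq\operatorname{Mod}(\mathcal{G}'[p^n])$ in $'(\operatorname{Mod}/S)$. Applying the anti-equivalence of Theorem \ref{Breuil} once more gives $\mathcal{G}[p^n]\simeq\mathcal{G}'[p^n]$ as group schemes over $\ok$.

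The main obstacle is the level bookkeeping in the second step: one must check carefully that $\operatorname{Mod}$, viewed as a functor on $p^n$-killed finite flat group schemes over $\ok$, genuinely factors through reduction modulo $p^{n+1}$. This is delicate because $\operatorname{Mod}(G)=\operatorname{Hom}_{(\mathrm{Ab}/\ok)}(G,\mathcal{O}^{\mathrm{cris}}_{\infty,\pi})$ is a sheaf-$\operatorname{Hom}$ over the \emph{full} syntomic site of $\operatorname{Spf}(\ok)$ --- evaluated on all syntomic formal schemes, not only those killed by $p^n$ --- so one cannot simply insert the reduction of the base; instead one uses that the target sheaves $\mathcal{O}^{\mathrm{cris}}_{n,\pi},\mathcal{J}^{\mathrm{cris}}_{n,\pi}$ and the transition maps of Lemma \ref{Breuil 2.3.3} involve only $\mathfrak{X}_n$ and the divided power envelope $S_n$, and then verifies that the resulting comparison is compatible with $\operatorname{Fil}^1$ and with the semilinear $\phi_1$. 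Once this is established, the reduction to Breuil modules and the two applications of the anti-equivalence of Theorem \ref{Breuil} are formal, the only other input being the Remark after Theorem \ref{Breuil} which places truncated Barsotti--Tate groups in the domain of that theorem.
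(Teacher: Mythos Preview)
Your proposal is correct and follows essentially the same route as the paper: reduce via Theorem \ref{Breuil} to an isomorphism of Breuil triples, observe that $(\mathcal{M},\operatorname{Fil}^1\mathcal{M})$ is already determined at low $p$-adic precision, and argue that the divided Frobenius $\phi_1=\phi/p$ only requires knowledge of the group scheme modulo $p^{n+1}$ because one must lift to level $n+1$, apply $\phi$, and divide by $p$. The paper's proof differs only in that it carries out explicitly the diagram chase you flag as the ``main obstacle'': it fixes an isomorphism $\psi\colon G_{n+1}\xrightarrow{\sim}G'_{n+1}$, lifts $\alpha\in\mathcal{J}^{\mathrm{cris}}_{n,\pi}(G)$ to $\beta\in\mathcal{J}^{\mathrm{cris}}_{n+1,\pi}(G)$, and checks directly that $\psi(\phi_1(\alpha))=\phi_1(\psi(\alpha))$ in $\mathcal{O}^{\mathrm{cris}}_{n,\pi}$ using $\phi(\beta)=p\,\phi_1(\alpha)$ and the analogous identity on the $G'$ side.
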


\begin{remark}
	Here, by saying that $\mathcal{G} \equiv \mathcal{G}^{\prime} \text{ mod } p^{n}$, we mean that $\mathcal{G}$ is isomorphic to $\mathcal{G}^{\prime}$ after base change to the ring $\ok/p^{n}$. 
	
\end{remark}
 \begin{proof} Fix a positive integer $n$. For any positive integer $m$, let $$G_m=\mathcal{G}[p^n] \times_{\Spec(\ok)} \Spec(\ok/p^m)$$ and $$G^{\prime}_m=\mathcal{G}^{\prime}[p^n] \times_{\Spec(\ok)} \Spec(\ok/p^m)$$ be the base change to $\ok/p^{m}$. By assumption, $G_{n+1}$ is isomorphic to $G^{\prime}_{n+1}$ as group schemes over $\ok/p^{n+1}$. The previous argument associates a triple to any $p^n$-torsion group scheme $G$ as follows
 $$G \rightsquigarrow (\mathcal{M}, \operatorname{Fil}^1 \mathcal{M}, \phi_1).$$
 Breuil's theorem \ref{Breuil} implies that such $G$ is classified by the triple. We claim that this classification depends on the reduction of the $p^n$-torsion subgroup modulo $p^{n+1}.$     
 \begin{itemize}
     \item The $S_n$ module $\mathcal{M}=\operatorname{Hom}_{\left(\mathrm{Ab} / \mathcal{O}_{K}\right)}\left(G, \mathcal{O}_{n, \pi}^{\mathrm{cris}}\right)$ depends only on the reduction modulo $p$ of the group scheme, so does $\operatorname{Fil}^1 \mathcal{M}=\operatorname{Hom}_{\left(\mathrm{Ab} / \mathcal{O}_{K}\right)}\left(G, \mathcal{J}_{n, \pi}^{\mathrm{cris}}\right)$. This is due to \cite[Corollary 3.2.2]{BR00}, we have $$\operatorname{Hom}_{S / E_n}\left(\underline{G}_1, \mathcal{O}_{S / E_n}\right) \simeq \operatorname{Hom}_{\left(A b / \mathcal{O}_K\right)}\left(G, \mathcal{O}_{n, \pi}^{c r i s}\right)$$ where $\underline{G}_1$ is the sheaf on $\left(S / E_n\right)_{CRIS}$ defined by $$\underline{G}_1(U, T)=G_1(U)=\operatorname{Hom}_S\left(U, G_1\right)$$ (resp. $\operatorname{Fil}^1 \mathcal{M}$ and $\mathcal{J}_{n, \pi}^{cris}$). 
     \item We claim that the \textit{divided Frobenius} $\phi_1 =\frac{\phi}{p}$, is defined by the reduction modulo $p^{n+1}$ of the group scheme. Denote by 
\begin{align}
      \mathcal{O}_{n, \pi}^{\text {cris }}(G) &\defeq H_{\text {cris }}^0\left(\left(G_n / E_n\right)_{\text {cris }}, \mathcal{O}_{G_n / E_n}\right)  \\
       \mathcal{J}_{n, \pi}^{\text {cris }}(G)&\defeq H_{\text {cris }}^0\left(\left(G_n / E_n\right)_{\text{cris}}, \mathcal{J}_{G_n / E_n}\right).
     \end{align}
There are naturally embeddings
 \begin{align}
     \operatorname{Hom}_{Ab / \ok}\left(G, \mathcal{O}_{n, \pi}^{\operatorname{cris}}\right) &\hookrightarrow \mathcal{O}_{n, \pi}^{\text {cris }}(G) \\
     \operatorname{Hom}_{A b / \ok}\left(G, \mathcal{J}_{n, \pi}^{\operatorname{cris}}\right) &\hookrightarrow \mathcal{J}_{n, \pi}^{\text {cris }}(G).
 \end{align}
 By the definitions in \cite[2.1.1]{BR00}, our claim is equivalent to saying that when $G_{n+1} \cong G^{\prime}_{n+1}$ over $\ok/p^{n+1}$, the following diagram commutes
    $$ \begin{tikzcd}
\mathcal{J}_{n, \pi}^{cris}(G[p^{n}]) \arrow[r, "\phi_1"] \arrow[d]
    & \mathcal{O}_{n, \pi}^{cris}(G[p^{n}]) \arrow[d, "\psi" ] \\
    \mathcal{J}_{n, \pi}^{cris}(G^{\prime}[p^{n}]) \arrow[r, "\phi_1"] & \mathcal{O}_{n, \pi}^{cris}(G^{\prime}[p^{n}])
     \end{tikzcd}. $$

We prove this by diagram chasing in the following diagrams. Fix the isomorphism $\psi: G_{n+1} \xrightarrow{\sim} G^{\prime}_{n+1}$, which also induces $G_{n} \xrightarrow{\sim} G^{\prime}_{n}.$ We denote this map also by $\psi$ for ease of notation.

$$
\xymatrix@C-=0cm{
&&\hspace{-35pt} \mathcal{J}_{n, \pi}^{\text {cris }}(G^{\prime}) &&\\
\hspace{10pt}\mathcal{J}_{n, \pi}^{\text {cris }}(G) \ar@{}[ur]^(.3){}="a"^(.8){}="b" \ar^{\psi} "a";"b" && & & \\
&&\hspace{-15pt} \mathcal{J}_{n+1, \pi}^{\text {cris }}(G^{\prime}) \ar@{}[r]^(.4){}="a"^(1.1){}="b" \ar^{\phi} "a";"b" \ar[]!<-5ex,-8ex>;[uu]!<-5ex,-3ex>&& \hspace{-25pt}\mathcal{O}_{n+1, \pi}^{\text {cris }}(G^{\prime})\\
\hspace{5pt}\mathcal{J}_{n+1, \pi}^{\text {cris }}(G) \ar[uu] \ar@{}[ur]^(.3){}="a"^(.8){}="b" \ar^{\psi} "a";"b"\ar@{}[rr]^(.5){}="a"^(1.2){}="b" \ar^{\phi} "a";"b" && & \hspace{-5pt} \mathcal{O}_{n+1, \pi}^{\text {cris }}(G) \ar@{}[ur]^(.3){}="a"^(.8){}="b" \ar_{\psi} "a";"b"&
}
$$

We start from picking any element $\alpha \in \mathcal{J}_{n, \pi}^{\text {cris }}(G)$. Suppose $\beta \in \mathcal{J}_{n+1, \pi}^{\text {cris }}(G)$ lifts $\alpha$. Applying $\psi$, we get $\psi(\beta) \in \mathcal{J}_{n+1, \pi}^{\text {cris }}(G^{\prime})$, then $\psi(\beta)$ also lifts $\psi(\alpha).$ Applying the Frobenius map $\phi \colon \mathcal{O}_{n, \pi}^{\operatorname{cris}} \to \mathcal{O}_{n, \pi}^{\operatorname{cris}} $ induced by the crystalline Frobenius, we get $\phi(\psi(\beta))=\psi(\phi(\alpha))$ in $\mathcal{O}_{n+1, \pi}^{\text {cris }}(G)$.

By Corollary \ref{Breuil 2.3.3} (see also \cite[Lemma 2.3.2]{BR00}), we deduce that $\phi\left(J_ {n, \pi}^{c r i s}\right) \subset p \mathcal{O}_{n, \pi}^{cris}$. To define the divided Frobenius $\phi_1$ (sheaf theoretically), we need to locally lift to modulo $p^{n+1}$, then apply $\phi$, and reduce back to modulo $p^n$ by dividing by $p$, as in \cite[II.2.3]{FM}. By definition, we have 
\begin{align}
    \phi(\beta)&=p(\phi_1(\alpha)) \\
    \phi(\psi(\beta))&=p\phi_1(\psi(\alpha)).
\end{align}
Therefore 
$$\psi(\phi(\beta))=p\psi(\phi_1(\alpha))$$
and 
$$\psi(\phi_1(\alpha))=\phi_1(\psi(\alpha)) $$ in $\mathcal{O}_{n, \pi}^{\text {cris }}(G).$ 
\end{itemize}

 \end{proof}

\subsection{p-adic neighborhoods of large monodromy}
We are now ready to prove a criterion for a $p$-divisible group $G/\ok$ to have large monodromy in the sense of Definition \ref{defnlarge monodromy} by proving the following theorem.

\begin{prop}\label{open neighborhoods}
	Let $G$ be the (unramified) connected reductive group that defines the Shimura datum. Let $$K_n=\{M \in \Sp_{2g}(\Z_{p}) \mid M \equiv I \text{ mod } p^{n} \}$$ for some fixed $n \ge 1$. Since $K_n$ is open in $\Sp_{2g}$, the intersection $H_n=G^{der}(\Z_p)\cap K_n$ is open in $G^{der}(\Q_p)$. Let $\mathcal{G}$ be a $p$-divisible group over $\ok$ and suppose that its $p$-adic Galois image contains $H_n$. Let $\mathcal{G}^{\prime}$ be another $p$-divisible group over $\ok$ that satisfies $\mathcal{G}[p^{n+1}] \simeq \mathcal{G}^{\prime}[p^{n+1}]$ as group schemes over $\ok$. Then the $p$-adic Galois image of $\mathcal{G}^{\prime}$ also contains $H_m$ for some $m \ge n$.
\end{prop}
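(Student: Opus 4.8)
The plan is: convert the isomorphism of $p^{n+1}$-torsion group schemes into a congruence of Galois representations mod $p^{n+1}$, use the hypothesis on $\mathcal{G}$ to drop a full ``infinitesimal copy'' of $\mathfrak{g}^{der}$ into the image of $\rho_{\mathcal{G}'}$, and then a short Lie-algebra argument to conclude that this forces $\operatorname{Lie}(\operatorname{Im}\rho_{\mathcal{G}'})\supseteq\mathfrak{g}^{der}$, hence that $\operatorname{Im}\rho_{\mathcal{G}'}$ contains an open subgroup of $G^{der}(\Z_p)$, i.e.\ some $H_m$.

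First I would reduce to representations. The isomorphism $\mathcal{G}[p^{n+1}]\simeq\mathcal{G}'[p^{n+1}]$ over $\ok$ induces, on generic fibres, a $\Gal(\overline{K}/K)$-equivariant isomorphism $T_p\mathcal{G}/p^{n+1}\simeq T_p\mathcal{G}'/p^{n+1}$; hence in suitable $\Z_p$-bases $\rho'(\sigma)\equiv C\,\rho(\sigma)\,C^{-1}\pmod{p^{n+1}}$ for all $\sigma$ and a fixed $C\in\mathrm{GL}_{2g}(\Z/p^{n+1})$, where $\rho:=\rho_{\mathcal{G}}$, $\rho':=\rho_{\mathcal{G}'}$. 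Since $\mathcal{G}'$ is a point of the formal neighborhood of the Hodge-type Shimura variety, Kisin's lemma \cite[Lemma 2.2.1]{kis} places $\operatorname{Im}\rho'$ inside $G(\Z_p)\subseteq\mathrm{GSp}_{2g}(\Z_p)$, while $\operatorname{Im}\rho\supseteq H_n$ by hypothesis. ($C$ enters only through a $\mathrm{GL}_{2g}(\F_p)$-conjugate of $\operatorname{Lie}(G^{der}_{\F_p})$ and is otherwise harmless; the reader uninterested in this bookkeeping may take $C=I$.)

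Next, transport and the pinch. Set $L:=\mathfrak{g}^{der}\cap\mathfrak{sp}_{2g}(\Z_p)$, a $\Z_p$-lattice of rank $d:=\dim\mathfrak{g}^{der}$ with $L\otimes\F_p=\operatorname{Lie}(G^{der}_{\F_p})\subseteq\mathfrak{sp}_{2g}(\F_p)$, and fix a $\Z_p$-basis $a_1,\dots,a_d$ of $L$. As $p\geq 3$, $K_1$ is a uniform pro-$p$ group, $\log$ identifies $K_m$ with $p^m\mathfrak{sp}_{2g}(\Z_p)$, and $H_n=\exp(p^nL)$, so $\exp(p^na_i)\in H_n\subseteq\operatorname{Im}\rho$; choose $\sigma_i\in\Gal(\overline{K}/K)$ with $\rho(\sigma_i)=\exp(p^na_i)$ and put $\gamma_i:=\rho'(\sigma_i)\in\operatorname{Im}\rho'$. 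The congruence forces $\gamma_i\in K_n$, and since $\exp(p^na_i)$ has trivial symplectic similitude so does $\gamma_i$ (the similitude character of $\rho'$, like that of $\rho$, being cyclotomic), whence $\gamma_i\in\Sp_{2g}(\Z_p)\cap G(\Z_p)$ and $p^{-n}\log\gamma_i\equiv Ca_iC^{-1}\pmod{p\,\mathfrak{sp}_{2g}(\Z_p)}$. Writing $\Gamma':=\operatorname{Im}\rho'$ (closed in $G(\Z_p)$) and $\mathfrak{l}:=\operatorname{Lie}(\Gamma')\subseteq\mathfrak{g}:=\operatorname{Lie}(G)$, the containment $\gamma_i\in\Gamma'\cap K_1$ gives $\alpha_i:=p^{-n}\log\gamma_i\in\mathfrak{l}\cap\mathfrak{sp}_{2g}(\Z_p)$, with reductions $\bar\alpha_i$ spanning $\operatorname{Ad}(\bar C)\operatorname{Lie}(G^{der}_{\F_p})$, a $d$-dimensional Lie subalgebra of $\mathfrak{sp}_{2g}(\F_p)$ that is perfect provided $p$ is not a bad prime for $G^{der}$. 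Now let $\mathfrak{m}\subseteq\mathfrak{l}$ be the $\Q_p$-Lie subalgebra generated by $\alpha_1,\dots,\alpha_d$: then $[\mathfrak{m},\mathfrak{m}]\subseteq[\mathfrak{g},\mathfrak{g}]=\mathfrak{g}^{der}$, whereas perfectness lets one select $d$ brackets $[\alpha_{i_k},\alpha_{j_k}]\in\mathfrak{l}\cap\mathfrak{sp}_{2g}(\Z_p)$ whose reductions form a basis of $\operatorname{Ad}(\bar C)\operatorname{Lie}(G^{der}_{\F_p})$, so $\dim_{\Q_p}[\mathfrak{m},\mathfrak{m}]\geq d$. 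Hence $[\mathfrak{m},\mathfrak{m}]=\mathfrak{g}^{der}$ and $\mathfrak{l}\supseteq\mathfrak{m}\supseteq\mathfrak{g}^{der}$, so $\Gamma'\cap G^{der}(\Z_p)$ is a closed subgroup of $G^{der}(\Z_p)$ with full Lie algebra, hence open, hence contains $H_m$ for all large $m$; choosing $m\geq n$ proves the proposition.

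The step carrying the real content — and the one I expect to be the main obstacle — is the passage from the single-level congruence to $\operatorname{Lie}(\Gamma')\supseteq\mathfrak{g}^{der}$: that congruence only pins down the reduction mod $p$ of the lattice $\mathfrak{l}\cap\mathfrak{sp}_{2g}(\Z_p)$, and promoting this to an inclusion of $\Q_p$-Lie algebras genuinely needs both that $\Gamma'\subseteq G(\Z_p)$ — so $[\mathfrak{m},\mathfrak{m}]$ is trapped inside the $d$-dimensional $\mathfrak{g}^{der}$, which is precisely why Kisin's lemma is indispensable — and the perfectness of $\operatorname{Lie}(G^{der}_{\F_p})$. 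The bookkeeping of the second paragraph (bases, the Weil pairing, and $\gamma_i$ landing in $G(\Z_p)$ with trivial similitude) and the explicit value of $m$ are routine, and I would isolate them as auxiliary lemmas.
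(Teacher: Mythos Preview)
Your argument is correct and takes a genuinely different route from the paper. Both start the same way --- convert $\mathcal{G}[p^{n+1}]\simeq\mathcal{G}'[p^{n+1}]$ into the congruence $\operatorname{Im}\rho'\equiv\operatorname{Im}\rho\pmod{p^{n+1}}$, so that $H_n\bmod p^{n+1}$ lies in the reduced image of $\rho'$ --- but diverge at the bootstrapping step. The paper argues by iterated $p$-th powers: it shows that for $m$ large the map $H_m/H_{m+1}\to H_{m+1}/H_{m+2}$, $A\mapsto A^p$, is a bijection (by passing through the $p$-adic exponential, where it becomes multiplication by $p$ on the graded pieces of the Lie lattice), and then inductively raises the containment $L_n\subset\operatorname{Im}\rho'\bmod p^{n+1}$ to $L_{n+k}\subset\operatorname{Im}\rho'\bmod p^{n+k+1}$ for all $k$. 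Your approach instead takes logarithms once and runs a static dimension count inside $\mathfrak{g}$: Kisin's lemma traps $[\mathfrak{m},\mathfrak{m}]$ inside the $d$-dimensional $\mathfrak{g}^{der}$, while perfectness of $\operatorname{Lie}(G^{der}_{\F_p})$ forces $\dim_{\Q_p}[\mathfrak{m},\mathfrak{m}]\geq d$, and these two squeeze out $\mathfrak{g}^{der}\subseteq\operatorname{Lie}(\operatorname{Im}\rho')$. The trade-off is that the paper's argument stays entirely inside $\Sp_{2g}(\Z_p)$ and needs no hypothesis beyond $p\geq 3$, whereas yours imports Kisin's lemma (already available in this Hodge-type setting) and the mild assumption that $\operatorname{Lie}(G^{der}_{\F_p})$ is perfect --- but in exchange your passage from the single-level congruence to openness is fully explicit, where the paper leaves the step from ``$L_{n+k}$ in the image mod $p^{n+k+1}$ for all $k$'' to ``$H_m\subset\operatorname{Im}\rho'$'' somewhat implicit. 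The similitude-character trick (both $\rho$ and $\rho'$ have cyclotomic multiplier, so $\rho(\sigma_i)\in\Sp_{2g}$ forces $\gamma_i\in\Sp_{2g}$) and the handling of $C$ are correct.
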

 
\begin{proof}
Denote by $\rho_{\G}$(resp. $\rho_{\G^{\prime}}$) the Galois representation associated with $\G$ (resp. $\G^{\prime}$). The assumption $\G[p^{n+1}] \simeq \G^{\prime}[p^{n+1}]$ implies that the image of $\rho_{\G}$ is isomorphic to that of $\rho_{\G^{\prime}}$ modulo $p^{n+1}$. Thus we have $H_n \bmod p^{n+1} \subset \Img(\rho_{\G} \bmod p^{n+1})$. 

A matrix $M \in H_n$ can be written as formal power series $M=I+\sum_{i=n}^{\infty} p^{i}M_{i}$ with each $M_{i} \in M_{2}(\F_{p})$. In particular, for $i \ge n$, let $L_i$ be the set of matrix in $ \Sp_{2g}(\Z/p^{i+1}\Z) $ such that 
$$L_i \coloneqq H_{i}/H_{i+1} \bmod p^{i+1}.$$
By definition, we have $L_{\infty}=H_n.$
The assumption and the preceding argument indicate that the image of $\rho_{\G^{\prime}}$ modulo $p^{n+1}$ contains $L_n$. Raising $L_n$ to the $p$-th power, we claim that the Galois image of $\rho_{\G^{\prime}}$ modulo $p^{n+2}$ also contains the set $L_{n+1}$. By induction, we conclude that $\Img(\rho_{\G^{\prime}})$ contains $H_n$.

To prove the claim, we show that the $p$-power map
\begin{align}\label{p-power}
    H_m/H_{m+1} &\to H_{m+1}/H_{m+2} \\
    A &\to A^p 
\end{align}
is surjective for some $m \ge n$. In general, for any linear algebraic group $G$ there exists such $m$. 

Algebraic groups over $p$-adic fields give rise to $p$-adic Lie groups, and the associated Lie algebra is the Lie algebra of the initial algebraic group. There is a notion of the $p$-adic exponential map similar to the algebraic group setting, which is only locally defined: there is a compact open subring of the Lie algebra $\operatorname{Lie(G)}$ on which the exponential is defined (see \cite{Bourbaki} for a reference). The exponential map is always injective and its image is a compact open subgroup of $G$, i.e., it contains $H_m$ for all $m \ge k$ where $k$ is a positive integer that depends only on $G$. Suppose $G=\operatorname{GL}_{2g}$. The Lie algebra of $G(\Q_p)$ is $M_{2g \times 2g}(\Q_p)$ and the inverse image of $H_m$ is $M_{2g \times 2g}(p^{m}\Z_p)$. Then the $p$-power map in the Lie group becomes multiplication by $p$ map in the Lie algebra
$$M_{2g \times 2g}(p^{m}\Z_p)/M_{2g \times 2g}(p^{m+1}\Z_p) \to M_{2g \times 2g}(p^{m+1}\Z_p)/M_{2g \times 2g}(p^{m+2}\Z_p)$$ which is an isomorphism. This proves the surjectivity of (\ref{p-power}). The claim follows from the fact that the subgroup exponential map $G^{\prime} \subset G$ of a Lie subgroup $G^{\prime}$ is the restriction of the exponential map of $G$, due to the naturality of the exponentials. 

\end{proof}

We conclude that Theorem \ref{congruence condition} follows from Lemma \ref{breuil lemma} and Proposition \ref{open neighborhoods}.

\section{Points with large monodromy are dense}
We make recollections on the Rapoport-Zink spaces of Hodge type, the $p$-adic uniformizations and the Grothendieck-Messing period map. We follow the set-up of \cite{kis} and also \cite{KIM_Uniformization}, to which the reader is referred for more details.
\subsection{Shimura varieties of Hodge type and the local Shimura-Hodge datum}\label{localdatum}
Let $(G, X)$ be a Shimura datum in the sense of Deligne's formalism, where $G$ is a connected reductive group over $\mathbb{Q}$ and let $X=\{h\}$ be the $G(\mathbb{R})$-conjugacy class of a Deligne cocharacter $h: \operatorname{Res}_{\mathbb{C} / \mathbb{R}} \mathbb{G}_m \rightarrow G_{\mathbb{R}}$. Let $\mu$ be the minuscule cocharacter $\mu_h: \mathbb{G}_{m \mathbb{C}} \rightarrow G_{\mathbb{C}}$ defined by $\mu_h(z)=h_{\mathbb{C}}(z, 1)$. Let $E \subset \overline{\mathbb{Q}} \subset \mathbb{C}$ be the reflex field that is the field of definition of the conjugacy class $\left\{\mu_h\right\}$. We call a Shimura datum is of \textit{Hodge type} if there is an embedding of Shimura data $$(G, X) \rightarrow\left(\mathrm{GSp}_{2 g}, \mathcal{S}^{\pm}\right)$$ induced by an embedding of algebraic groups

$$
\iota: G \hookrightarrow \mathrm{GSp}_{2 g}
$$over $\mathbb{Q}$. Here $\mathcal{S}^{\pm}$ is the union of the Siegel upper and lower half spaces of genus $g$. A choice of field embedding $\overline{\mathbb{Q}} \hookrightarrow \overline{K}_0$ determines a place $v \mid p$ of $E$ and the completion $E_v$ is the field of definition of $\left\{\mu_h\right\}$, regarding as a $G(\overline{K_0})$-conjugacy class of cocharacters $$\mu : \mathbb{G}_{m \overline{K_0}} \rightarrow G_{\overline{K_0}}.$$

After the work of Kisin \cite{kis} and Vasiu \cite{Vas1,Vas2,Vas3}, there is a good theory of canonical integral models of Shimura varieties of Hodge type at primes of good reduction. For $G$ unramified, let $K=K_p K^p \subset G\left(\mathbb{A}_f\right)$ be a sufficiently small compact open subgroup of $ G\left(\mathbb{A}_f\right)$ with $K_p \subset G\left(\mathbb{Q}_p\right)$ hyperspecial. The Shimura variety $\mathcal{S}h_{K}(G, X)$ admits a canonical smooth integral model$$
\mathscr{S}=\mathscr{S}_K(G, X)
$$over the localization $\mathcal{O}_{E,(v)}$. 

A point $x_0 \in \mathscr{S}(k)$ on a global Shimura variety of Hodge type gives rise to a  \textit{local Shimura-Hodge datum} $\left(G_{\mathbb{Z}_p}, [b_{x_0}], \{\mu_{x_0}\}, C_{\mathbb{Z}_p}\right)$. Let $X_0=A_{x_0}\left[p^{\infty}\right]$ be the $p$-divisible group of the abelian variety $A_{x_0}$ over $k$ determined by $x_0$ and let $\mathbb{D}\left(X_0\right)(W)$ be the \text {Dieudonné module} of $X_0$. By \cite[Cor. 1.4.3]{kis}, there are crystalline tensors
$$
t_{\alpha, 0} \in \mathbb{D}\left(X_0\right)(W)^{\otimes}
$$
which are Frobenius invariant in $\mathbb{D}\left(X_0\right)(W)^{\otimes}[1 / p]$. Moreover, there is a $W$-module isomorphism
$$
D \otimes_{\mathbb{Z}_p} W \xrightarrow{\sim} \mathbb{D}\left(X_0\right)(W)
$$identifying $s_\alpha \otimes 1$ with $t_{\alpha, 0}$ where $s_\alpha$ are tensors that cut out the group $G$. After fixing the isomorphism, the Frobenius operator on $\mathbb{D}\left(X_0\right)(W)$ induces an operator on $D \otimes_{\mathbb{Z}_p} W$ of the form $F=b_{x_0} \circ \sigma
$ for some $b_{x_0} \in G(K_0)$, where $\sigma \in \operatorname{Aut}(W)$ lifts the absolute Frobenius on $k$. By [loc. cit.], the Hodge filtration on $\mathbb{D}\left(X_0\right)(k)$ is given by a $G_k$-valued cocharacter. We may pick any lift to a cocharacter $\mu_{x_0}: \mathbb{G}_{m W} \rightarrow G_W$ such that $$b_{x_0} \in G(W) \mu_{x_0}^\sigma(p) G(W) . $$

The $G(W)$-conjugacy class of $\mu_{x_0}$ is the same as the conjugacy class of the inverse of the Deligne cocharacter $\mu_h$. In particular, $\mu_{x_0}$ and $\mu_h^{-1}$ become conjugate after fixing an isomorphism $\mathbb{C} \xrightarrow{\sim} K_0$ whose restriction to $E \hookrightarrow \overline{K}_0$ induces the place $v$.

Fix $b:=b_{x_0} \in[b_{x_0}]$. Associated to the local Shimura datum is an algebraic group $J_{b}$ over $\Q_p$ such that $$J_b(\Q_p):=\left\{g \in G\left(K_0\right) \mid g b \sigma(g)^{-1}=b\right\}.$$ It is the group of quasi-isogenies that preserves the crystalline tensors. There is a natural right action of $J_b (\mathbb{Q}_p)$ on the formal scheme $\mathrm{RZ}_{G, b}$ defined as follows. Let $R \in \operatorname{Nilp_W}$ and let $(X, \varrho)$ be a point in $\mathrm{RZ}_{G, b}(R).$ Let $\gamma \in J_b(\Q_p),$ then 

$$
(X, \varrho, (t_\alpha)) \longmapsto(X, \varrho \circ \gamma, (t_\alpha)).
$$

\subsection{the Rapoport-Zink spaces of Hodge type}
Just like Hodge type Shimura varieties are constructed from Hodge type Shimura data, there is a natural $p$-adic local analogue of such Shimura varieties arising from the local Shimura-Hodge datum serving as the “moduli spaces” of $p$-divisible groups equipped with certain crystalline Tate tensors. In the work of Howard and Pappas \cite{Howard_Pappas_2017}, an analogue of the Rapoport-Zink space is constructed for the local Shimura datum arises from a point on the integral model of a global Shimura variety of Hodge type, generalizing the work of \cite{RZ}. The construction is also done in the work of Kim \cite{KIM_RZ}, and Kim's method works in a more general setting that does not require the local Shimura datum coming from points on a global Shimura variety. More precisely, let $\mathbb{X}$ be a p-divisible group over $\overline{\F}_p.$ There exists a closed formally smooth subscheme $\mathrm{RZ}_{G, b} \subset \mathrm{RZ}_{\mathbb{X}}$ that only depends on the associated unramified Hodge-type local Shimura datum $\left(G,[b],\left\{\mu^{-1}\right\}\right)$, which classifies deformations of $\mathbb{X}$ with Tate tensors $\left(\mathbf{t}_\alpha\right)$ up to quasi-isogeny, such that the Hodge filtration of the $p$-divisible group is \'etale-locally given by some cocharacter in the conjugacy class $\{\mu\}$ \cite{KIM_RZ}. Then the local Shimura varieties are the rigid analytic tower $\{\mathrm{RZ}_{G, b}^{K}\}$ over the generic fiber of $\mathrm{RZ}_{G, b}$ when $K \subset G\left(\mathbb{Q}_p\right)$ runs through open compact subgroups of $\left.G\left(\mathbb{Q}_p\right)\right)$.

\subsection{The Grothendieck-Messing period map} Since $\mathrm{RZ}_{G, b}$ is locally formally of finite type over $\operatorname{Spf} W$, Berthelot's construction of rigid generic fiber $\mathrm{RZ}_{G, b}^{\text{rig }}$ can be applied to obtain the period map on the rigid generic fiber $\mathrm{RZ}_{G, b}^{\text{rig }}$ which is \'etale.

 For a given local Shimura datum $(G, b, \{\mu\})$, one can associate a flag variety $\mathcal{F}:=\mathcal{F}(G,\{\mu\})$ to the parabolic groups of type $\mu$, which is an algebraic variety over $E$. Let $\breve{E}=\breve{\Q}_{p}. E$ be the completion of the maximal unramified extension of $E \subset \overline{\Q}_p$. We denote by $$\breve{\mathcal{F}}=\mathcal{F} \times_{\Spec E} \Spec{ \breve{E}}$$ which is an algebraic variety over $\breve{E}.$ Let $\breve{\mathcal{F}}^{\text{rig}}$ be the rigid analytic space associated with $\breve{\mathcal{F}}$ over $\breve{E}.$ Then we have the period map
$$
\breve{\pi} : \mathrm{RZ}_{G, b}^{\text{rig }} \rightarrow \mathcal{F}^{\text{rig}},
$$
which is an \'etale morphism and is $J_b\left(\mathbb{Q}_p\right)$-equivariant. 

    \subsection{The connectedness of p-adic period domains} Denote by $$\breve{\mathcal{F}}^{\text{wa}}=\breve{\mathcal{F}}(G, b,\{\mu\})^{\text{wa}}$$ the \textit{period domain} associated with $(G, b,\{\mu\})$. It is an admissible open subset of $\breve{\mathcal{F}}^{\text {rig }}$ consists of elements in $\breve{\mathcal{F}}(\overline{\Q}_p)$ such that $(b, \mu)$ is a weakly admissible pair for $G$. Let $\breve{\mathcal{F}}^{\text{a}}$ be the image of $\breve{\pi}$ as an adic space. Then $\breve{\mathcal{F}}^{\mathrm{a}} \subseteq \breve{\mathcal{F}}^{\text{wa}}$. We will use the fact that their points as adic spaces have the same sets of points with values in a finite extension of $\breve{E}$ \cite{CF00}.

It is conjectured that $\breve{\mathcal{F}}^{\text{a}}$ is arcwise connected \cite[Conjecture 6.5]{Hartl}. This fact was proved in a series of articles in different settings \cite{CH14,Gle,GLX}, and in complete generality in \cite[Theorem 1.1]{GL}. We prove the following lemma, which was conjectured in \cite{RV} and proved in the case of unramified EL and PEL type in \cite{CH14}.
\begin{lem}\label{connectedness}
    For any connected component $M$ of $\mathrm{RZ}^{\text{rig}}_{G,b}$, $\breve{\pi}(M)=\breve{\mathcal{F}}^{\text{a}}$.
\end{lem}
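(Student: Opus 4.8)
The plan is to deduce the lemma from the arcwise-connectedness of $\breve{\mathcal{F}}^{\text{a}}$ (cited from \cite{GL}) together with the covering-space structure of the Grothendieck--Messing period map \emph{over its image}. Recall first that the finite-level tower $\{\mathrm{RZ}^{K}_{G,b}\}_{K\subseteq G(\Q_p)}$ sits at infinite level $\mathrm{RZ}_{G,b,\infty}:=\varprojlim_{K}\mathrm{RZ}^{K}_{G,b}$, on which the period map $\breve{\pi}_{\infty}\colon \mathrm{RZ}_{G,b,\infty}\to\breve{\mathcal{F}}^{\text{a}}$ is a pro-\'etale torsor under the constant pro-group $\underline{G(\Q_p)}$; this is the Hodge-type version (Howard--Pappas \cite{Howard_Pappas_2017}, Kim \cite{KIM_RZ}) of Scholze--Weinstein's description of infinite-level Rapoport--Zink spaces, and $\breve{\pi}_{\infty}$ is the composite of the pro-\'etale $\underline{G(\Z_p)}$-torsor $\mathrm{RZ}_{G,b,\infty}\to\mathrm{RZ}^{\text{rig}}_{G,b}$ with $\breve{\pi}$. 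In particular $\breve{\pi}$ (resp.\ $\breve{\pi}_{\infty}$) is, \'etale-locally on $\breve{\mathcal{F}}^{\text{a}}$, a disjoint union of copies of the base indexed by the discrete set $G(\Q_p)/G(\Z_p)$ (resp.\ $G(\Q_p)$); and $\breve{\pi}$ is partially proper, since $\mathrm{RZ}^{\text{rig}}_{G,b}\to\Spa(\breve{E})$ is partially proper (Berthelot's rigid generic fibre of a formal scheme locally formally of finite type) while $\breve{\mathcal{F}}^{\text{rig}}\to\Spa(\breve{E})$ is separated. Being partially proper and \'etale with totally disconnected fibres, $\breve{\pi}$ is a covering map onto $\breve{\mathcal{F}}^{\text{a}}$ in the sense that it satisfies unique path-lifting.

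Granting this, fix a connected component $M$ of $\mathrm{RZ}^{\text{rig}}_{G,b}$. The inclusion $\breve{\pi}(M)\subseteq\breve{\mathcal{F}}^{\text{a}}$ is clear; for the reverse, pick $P\in\breve{\pi}(M)$ and an arbitrary $Q\in\breve{\mathcal{F}}^{\text{a}}$. By arcwise-connectedness there is a path (or finite chain of arcs) in $\breve{\mathcal{F}}^{\text{a}}$ joining $P$ to $Q$; lift it through $\breve{\pi}$ starting from a chosen preimage of $P$ in $M$. By unique path-lifting the lift remains in $M$, because $M$ is open and closed in $\mathrm{RZ}^{\text{rig}}_{G,b}$, so its endpoint lies in $M$ and maps to $Q$; hence $Q\in\breve{\pi}(M)$ and thus $\breve{\pi}(M)=\breve{\mathcal{F}}^{\text{a}}$. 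Equivalently, and without invoking paths: $\breve{\pi}(M)$ is open because $\breve{\pi}$ is \'etale, and closed because any point of its closure has a trivializing connected neighbourhood $U$ over which some sheet — being connected and meeting $M$ — must lie in $M$ and surjects onto $U$; as $\breve{\mathcal{F}}^{\text{a}}$ is connected and $M\neq\emptyset$, this forces $\breve{\pi}(M)=\breve{\mathcal{F}}^{\text{a}}$. (One may alternatively run this argument at infinite level on a component $C\subseteq\mathrm{RZ}_{G,b,\infty}$ lying over $M$ and then project, using that $\mathrm{RZ}_{G,b,\infty}\to\mathrm{RZ}^{\text{rig}}_{G,b}$ is surjective and that $\breve{\pi}_{\infty}(C)\subseteq\breve{\pi}(M)$.)

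I expect the main obstacle to be the first step: one needs the infinite-level period map to be genuinely a $\underline{G(\Q_p)}$-torsor \emph{onto} $\breve{\mathcal{F}}^{\text{a}}$ — equivalently, that $\breve{\pi}$ is an honest covering of $\breve{\mathcal{F}}^{\text{a}}$, not merely an \'etale morphism — in the Hodge-type generality used here. A related subtlety, and the reason the recent theorem of \cite{GL} is essential rather than the classical connectedness of period \emph{domains}, is that $\breve{\mathcal{F}}^{\text{a}}$ can be strictly contained in $\breve{\mathcal{F}}^{\text{wa}}$: every topological input (connectedness, existence of connected trivializing neighbourhoods, path-lifting) must be taken relative to $\breve{\mathcal{F}}^{\text{a}}$ itself. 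A secondary technical point is checking the partial properness and the ``evenly covered'' behaviour in the adic-space framework. Once these are in place the covering-space argument above is routine, and the statement recovers the conjecture of \cite{RV} in the present setting.
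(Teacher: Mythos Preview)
Your argument is essentially correct and reaches the same conclusion, but via a different mechanism than the paper. Both proofs take the connectedness of $\breve{\mathcal{F}}^{\text{a}}$ from \cite{GL} as the main input. The paper then invokes Gleason's equivalence \cite[Prop.~4.8, Lem.~4.9]{Gle} between connectedness of the admissible locus and transitivity of the $G(\Q_p)$-action on $\pi_0(\mathrm{RZ}_{G,b}^{\infty})$, passes this transitivity down to $\pi_0(\mathrm{RZ}_{G,b}^{\mathrm{rig}})$ via $\pi_0(\mathrm{RZ}^{\infty})=\varprojlim_K\pi_0(\mathrm{RZ}^K)$, and concludes that all components of $\mathrm{RZ}_{G,b}^{\mathrm{rig}}$ have the same $\breve{\pi}$-image (using that the $G(\Q_p)$-action on the tower covers the identity on $\breve{\mathcal{F}}^{\text{a}}$). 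You instead argue directly that $\breve{\pi}$ is a covering of $\breve{\mathcal{F}}^{\text{a}}$ and then run the standard ``open and closed'' argument: $\breve{\pi}(M)$ is open by \'etaleness and closed by the evenly-covered property, hence all of $\breve{\mathcal{F}}^{\text{a}}$.

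What each buys: the paper's route is more of a black box---it outsources the covering-space content to Gleason's cited equivalence, so nothing about partial properness, path-lifting, or trivializing neighbourhoods in the adic setting needs to be checked in the paper itself. Your route is conceptually cleaner and uses only ordinary connectedness (your open--closed variant makes the arcwise hypothesis unnecessary), but it shifts the burden to verifying that $\breve{\pi}$ really is evenly covered over $\breve{\mathcal{F}}^{\text{a}}$ in Hodge-type generality---exactly the point you flag as the main obstacle. That input is available (the $\underline{G(\Q_p)}$-torsor structure at infinite level \`a la Scholze--Weinstein, extended to the Hodge-type setting), and indeed Gleason's equivalence used in the paper rests on the same circle of ideas; so the two proofs are morally equivalent, differing mainly in how much is quoted versus argued in place.
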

\begin{proof}
    Let $\left\{\mathrm{RZ}_{G, b}^K\right\}$ be the $G\left(\mathbb{Q}_p\right)$-equivariant tower of finite \'etale covers of $\mathrm{RZ}_{G, b}^{\text {rig}}$ constructed from open compact subgroups $K \subset G\left(\mathbb{Z}_p\right)$ in \cite[Section 7.4]{KIM_RZ}. The right $G(\Q_p)$ action on the tower is given by a collection of isomorphisms
    $$[g]: \mathrm{RZ}_{G, b}^{K_g} \xrightarrow{\sim} \mathrm{RZ}_{G, b}^{gK_{g}g^{-1}}$$ where $K_g :=gKg^{-1} \cap K$ (\cite[Proposition 7.4.8]{KIM_RZ}).
    We have the following commutative diagram:
    \begin{equation*}
\begin{tikzcd}
   \mathrm{RZ}_{G, b}^{K_{g}} \arrow[rr, "g"] \arrow[dr, "\pi_{K_{g}}", swap] & &  \mathrm{RZ}_{G, b}^{gK_{g}g^{-1}} \arrow[dl, "\pi_{gK_{g}g^{-1}}"] \\
   &  \mathrm{RZ}_{G, b}^{\text{rig}} \arrow[d, "\breve{\pi}"] & \\
   & \breve{\mathcal{F}}^{\text{a}} & 
\end{tikzcd}
\end{equation*}
where $\pi_{K_{g}}$ and $\pi_{gK_{g}g^{-1}}$ are projection maps.
Gleason and Lourenco proved that $p$-adic period domains are geometrically connected \cite{GL}. In particular, their Theorem 1.1 implies that $\breve{\mathcal{F}}^{\text{a}}$ is connected. In \cite[Proposition 4.8, Lemma 4.9]{Gle}, Gleason also proved that a period domain is connected is equivalent to the group $G(\Q_p)$ acting transitively on the infinite-level Rapoport-Zink spaces $ \mathrm{RZ}_{G, b}^{\infty}$. We claim that $G(\Q_p)$ acting transitively on connected components of the tower is equivalent to acting transitively on $ \mathrm{RZ}_{G, b}^{\infty}$. Indeed, for any $K_1 \subset K_2$, the transition map $$ \mathrm{RZ}_{G, b}^{K_1} \to  \mathrm{RZ}_{G, b}^{K_2}$$
is a surjective map on the connected components between the underlying topological spaces of the source and the target. Therefore, the limit of $\pi_0$ is that of the limit
$$\pi_0(\mathrm{RZ}_{G, b}^{\infty}) = {\varprojlim_K } \text{ } \pi_0(\mathrm{RZ}_{G, b}^{K}) .$$
This together with the fact that the underlying topological space of the diamond associated to a rigid analytic space $\mathrm{RZ}_{G, b}^{K}$ is the same as $\left| \mathrm{RZ}_{G, b}^{K} \right|$ so that they have the same connected components, one deduce that the $G(\Q_p)$ action is transitive on the connected components of rigid analytic spaces. In particular, all the connected components of $\mathrm{RZ}_{G, b}^{\text {rig}}$ have the same image under the map $\breve{\pi}$. 
\end{proof}

\subsection{The rigid analytic uniformizations of Shimura varieties} The completion of the integral model $\mathscr{S}$ along an isogeny class can be described by \text{p-adic uniformizations}. When the Shimura variety is of PEL type, Rapoport and Zink constructed the uniformization in \cite{RZ}, where the uniformization is a formal scheme and the basic locus of $\mathcal{S}h_{K}(\overline {\F}_p)$ can be interpreted as a quotient of it. Their results were generalized to Hodge type Shimura varieties by Kim \cite{KIM_Uniformization} and Howard-Pappas \cite{Howard_Pappas_2017}. The $p$-adic uniformization theorem implies a corresponding rigid analytic uniformization theorem that allows us to represent an admissible open subset of the rigid analytic variety $\mathcal{S}h_{K}^{\text{rig}}$ associated to the Shimura variety $\mathcal{S}h_{K}(G, X)$ as direct sum of quotients of $\mathrm{RZ}_{G, b}^{\text{rig }}$. In particular, let $\phi$ be an $\overline{\F}_p$-isogeny class which is a countable (finite if $\phi$ is in the basic Newton stratum) union $\mathscr{I}^\phi:=\left\{Z_i\right\}_{i}$ of irreducible closed subsets (resp. irreducible subvarieties) of $\mathscr{S}_{\overline{\mathbb{F}}_p}$ and denote by $$\mathcal{S}h_{K}^{\mathrm{rig}}(\phi):=\left(\left(\widehat{\mathscr{S}}_{W}\right)_{/ \mathscr{I} \phi}\right)^{\mathrm{rig}}$$ the union of tubular neighborhood of the irreducible subvarieties of $\mathscr{I}^\phi$, then one can interpret $\mathcal{S}h_{K}^{\text {rig }}(\phi)$ as the (finite) disjoint sum of quotients of its coverings by a discrete subgroup of $J_b(\Q_p)$ \cite[Theorem 4.7; Theorem 5.4]{KIM_Uniformization}. We summarize the results of uniformization in the following theorem.

\begin{thm}\label{uniformization}
    Let $\mathcal{S}h_{K,0}^{\mathrm{rig}}(\phi)$ be a connected component of $\mathcal{S}h_{K}^{\mathrm{rig}}(\phi).$ For sufficiently small $K^p$, there is a connected component $\mathrm{RZ}_{G, b}^{\mathrm{rig},0}$ of $\mathrm{RZ}_{G, b}^{\mathrm{rig}}$ such that the restriction map $\Theta^{\phi, \mathrm{rig}}\colon \mathrm{RZ}_{G, b}^{\mathrm{rig},0} \to \mathcal{S}h_{K,0}^{\mathrm{rig}}(\phi)$ is a topological covering of $\mathcal{S}h_{K,0}^{\mathrm{rig}}(\phi)$. Specifically, $\mathcal{S}h_{K,0}^{\mathrm{rig}}(\phi)$ is the quotient of $\mathrm{RZ}_{G, b}^{\mathrm{rig},0}$ by a discrete subgroup of $J_b(\Q_p).$
\end{thm}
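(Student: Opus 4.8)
The plan is to obtain the statement as a repackaging of the formal $p$-adic uniformization theorem of Kim \cite{KIM_Uniformization} (see also Howard--Pappas \cite{Howard_Pappas_2017}, and \cite{RZ} in the PEL case), by passing to Berthelot's rigid generic fiber and then bookkeeping connected components. Recall that the isogeny class $\phi$ determines an algebraic group $I_{\phi}$ over $\Q$ equipped with embeddings $I_{\phi}(\Q) \hookrightarrow J_{b}(\Q_{p})$ and $I_{\phi}(\Q) \hookrightarrow G(\mathbb{A}_{f}^{p})$, and Kim's theorem \cite[Theorems 4.7 and 5.4]{KIM_Uniformization} provides a $J_{b}(\Q_{p})$-equivariant isomorphism of formal schemes over $\operatorname{Spf} W$
$$
\left(\widehat{\mathscr{S}}_{W}\right)_{/\mathscr{I}^{\phi}} \;\simeq\; I_{\phi}(\Q) \backslash \left( \mathrm{RZ}_{G,b} \times G(\mathbb{A}_{f}^{p})/K^{p} \right).
$$
Since $G(\mathbb{A}_{f}^{p})/K^{p}$ is a discrete set on which $I_{\phi}(\Q)$ acts with finite stabilizers, shrinking $K^{p}$ to be neat makes these stabilizers trivial, so the formation of the rigid generic fiber commutes with the quotient and yields
$$
\mathcal{S}h_{K}^{\mathrm{rig}}(\phi) \;\simeq\; I_{\phi}(\Q) \backslash \left( \mathrm{RZ}_{G,b}^{\mathrm{rig}} \times G(\mathbb{A}_{f}^{p})/K^{p} \right).
$$

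Next I would choose representatives $g_{j}$ for the double cosets $I_{\phi}(\Q) \backslash G(\mathbb{A}_{f}^{p}) / K^{p}$ and rewrite the right-hand side as $\bigsqcup_{j} \Gamma_{j} \backslash \mathrm{RZ}_{G,b}^{\mathrm{rig}}$, where $\Gamma_{j}$ is the subgroup of those $\gamma \in I_{\phi}(\Q)$ whose image in $G(\mathbb{A}_{f}^{p})$ lies in $g_{j}K^{p}g_{j}^{-1}$, viewed inside $J_{b}(\Q_{p})$ via the $p$-adic embedding. Because $I_{\phi}(\Q)$ is a discrete subgroup of $I_{\phi}(\mathbb{A}_{f})$ and $g_{j}K^{p}g_{j}^{-1}$ is compact open away from $p$, each $\Gamma_{j}$ is a discrete subgroup of $J_{b}(\Q_{p})$ (in the basic case one uses that $I_{\phi}$ is anisotropic modulo its center, which makes this transparent); for $K^{p}$ sufficiently small it is moreover torsion-free, hence meets every compact subgroup of $J_{b}(\Q_{p})$ trivially and so acts freely and properly discontinuously on $\mathrm{RZ}_{G,b}^{\mathrm{rig}}$. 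Therefore each projection $\mathrm{RZ}_{G,b}^{\mathrm{rig}} \to \Gamma_{j} \backslash \mathrm{RZ}_{G,b}^{\mathrm{rig}}$ is a topological covering of rigid analytic spaces.

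Finally I would pass to connected components. The group $\Gamma_{j}$ permutes $\pi_{0}(\mathrm{RZ}_{G,b}^{\mathrm{rig}})$, and $\pi_{0}(\Gamma_{j} \backslash \mathrm{RZ}_{G,b}^{\mathrm{rig}}) = \Gamma_{j} \backslash \pi_{0}(\mathrm{RZ}_{G,b}^{\mathrm{rig}})$; for a representative component $\mathrm{RZ}_{G,b}^{\mathrm{rig},0}$ of a $\Gamma_{j}$-orbit, the corresponding connected component of $\Gamma_{j} \backslash \mathrm{RZ}_{G,b}^{\mathrm{rig}}$ is the image of $\mathrm{RZ}_{G,b}^{\mathrm{rig},0}$, which equals $\Gamma \backslash \mathrm{RZ}_{G,b}^{\mathrm{rig},0}$ with $\Gamma := \operatorname{Stab}_{\Gamma_{j}}(\mathrm{RZ}_{G,b}^{\mathrm{rig},0})$. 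Given the fixed connected component $\mathcal{S}h_{K,0}^{\mathrm{rig}}(\phi)$, I choose the pair $(j, \mathrm{RZ}_{G,b}^{\mathrm{rig},0})$ realizing it in this way; then $\Theta^{\phi,\mathrm{rig}}$ is the restriction to $\mathrm{RZ}_{G,b}^{\mathrm{rig},0}$ of the covering map above, and $\Gamma \subset \Gamma_{j} \subset J_{b}(\Q_{p})$ is the required discrete subgroup.

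The main obstacle is not any single estimate but the careful justification that Berthelot's rigid generic fiber functor commutes with the quotient by $I_{\phi}(\Q)$ and that the resulting rigid space is precisely the tubular neighborhood $\mathcal{S}h_{K}^{\mathrm{rig}}(\phi)$ of the (possibly countable) union of subvarieties $\mathscr{I}^{\phi}$ appearing in the statement; this rests on knowing that the formal uniformization is an isomorphism onto the completion along the entire isogeny class and that the local finiteness of the $I_{\phi}(\Q)$-action, again a consequence of neatness of $K^{p}$, lets one compute the generic fiber chart by chart. The discreteness and torsion-freeness of the $\Gamma_{j}$ are structural facts about $I_{\phi}$ that I would cite from \cite{KIM_Uniformization,Howard_Pappas_2017} rather than reprove.
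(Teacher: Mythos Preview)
The paper does not give its own proof of this theorem: it presents the statement explicitly as a summary of Kim's results, writing ``We summarize the results of uniformization in the following theorem'' and pointing to \cite[Theorem 4.7; Theorem 5.4]{KIM_Uniformization}. Your proposal is correct and follows exactly the route the paper intends---deduce the rigid-analytic statement from Kim's formal uniformization by passing to Berthelot's generic fiber, decomposing over double cosets $I_{\phi}(\Q)\backslash G(\mathbb{A}_f^p)/K^p$, and then restricting to connected components---only you actually spell out the bookkeeping (discreteness and torsion-freeness of the $\Gamma_j$, commutation of the generic fiber with the quotient for neat $K^p$, stabilizer subgroup on $\pi_0$) that the paper leaves implicit in the citation.
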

\begin{remark}
    When $\phi$ corresponds to some $b$ in the basic locus, then $\mathcal{S}h_{K}^{\mathrm{rig}}(\phi)$ is the tube of $\mathscr{I}^\phi$ in $\widehat{\mathscr{S}}_{K, W}$. In general, $\mathcal{S}h_{K}^{\text {rig }}(\phi)$ is a union of tubes of the irreducible subvarieties $Z \in \mathscr{I}^\phi$ \cite[5.3]{KIM_Uniformization}. For a $p$-divisible group $\mathbb{X}$ in the isogeny class $\phi$, let $X$ be a lifting of $\mathbb{X}$ over some finite extension $\mathcal{O}_K$ of $\mathcal{O}_E$, then $X$ is contained in one of the tubes in $\mathcal{S}h_{K}^{\text {rig }}(\phi)$.
\end{remark}

\subsection{Points with large monodromy are dense}
In this section we show that the locus with large monodromy $\mathcal{S}^{K}_{max}$ is dense in $\mathcal{S}^K$ by showing that for a fixed formal lifting $\tilde{A}$ of $A \in \mathcal{S}h_{K}(\overline{\F}_p)$, there exists a generic point in any $p$-adic neighborhood of $\tilde{A}$.

For a finite extension $\overline{\bE} / K / \bE$, the $K$-points in the weakly admmisible locus $\mathcal{F}^{\text{wa}}$ are admissible by Colmez-Fontaine \cite{CF00}. Therefore, for each point $x$ in $\breve{\mathcal{F}}^{\text{wa}}(K)$, we get a crystalline representation with additional structures attached to $x$
$$\xi_{x}: \operatorname{Gal}(\bar{\bE}/K) \to G(\Q_{p})$$
which is well defined up to $G(\Q_{p})$-conjugation. Let $X^{univ}_{G, b}$ be the universal $p$-divisible group on $\mathrm{RZ}_{G, b}$. For $x_0 \in \mathrm{RZ}_{G, b}^{\text{rig}}(K)$ such that $\breve{\pi}(x_0)=x$, there is a Galois representation attached to the Tate module 
 $$\operatorname{Gal}(\bar{\bE}/K) \to V_{p}(X_{x_0,G,b}^{univ}).$$ Thus, we obtain the crystalline representation $\xi_{x_0}$ after fixing an isomorphism $$V \simeq V_p(X_{x_0, G, b}^{univ})$$ where $(V, \psi )$ is the symplectic space $$G \hookrightarrow \operatorname{GSp}(V, \psi).$$

The concept of the \textit{generic point} is defined in the sense of \cite[Chapter 5]{CH14}.
\begin{defn}
Let $K$ be a finite extension of $\bE$. Let $R$ be an $E$-algebra with transcendental generators $a_{1},\cdots,a_{n}$ such that $R$ is a finite extension of $\Q_{p}[a_{1},\cdots,a_{n}]$. Write $\mathcal{F}$ in the affine form $\mathcal{F}= \operatorname{Spec} R$. A point $x \in \mathcal{F}(K)$ is called \textit{generic} if the corresponding homomorphism $x: R \to K$ sends $a_{1},\cdots,a_{n}$ to a set of elements $b_{1},\cdots,b_{n}$ that are algebraically independent over $\Q_{p}$.
\end{defn}

The following theorem indicates that a generic point has large monodromy, which is a key ingredient in our proof of proposition \ref{dense}. 

\begin{thm}\cite[Theorem C]{CH14}\cite[Theorem 1.14]{GLX} 
  Suppose that the Hodge and Newton polygons associated with $\{\mu\}$ and $[b]$ do not touch outside their extremities. Assume $[b] \in B(G, {\mu})$. Let $K/ \bE$ an extension of finite degree and $x \in \mathcal{F}(K)$ which is a generic point of $\mathcal{F}$. Then $x \in \bF^{\text{wa}}(K)$ and the image of the crystalline representation $$\xi_{x}\colon \Gal(\bar{\bE}/K) \to G(\Q_{p})$$ contains an open subgroup of $G^{der}(\Q_{p})$.
\end{thm}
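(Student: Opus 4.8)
The plan is to extract both conclusions from one principle: a \emph{generic} point of $\mathcal F$ lies on no proper closed subvariety of $\mathcal F$ defined over $\overline{\Q}_p$. Indeed, if $b_1,\dots,b_n$ are algebraically independent over $\Q_p$ they remain so over $\overline{\Q}_p$ (an algebraic extension of the base field does not change transcendence degree), so the image of a generic $x\colon R\to K$ in $\mathcal F_{\overline{\Q}_p}$ is Zariski dense. Apply this first to weak admissibility: the complement $\bF^{\mathrm{rig}}\setminus\bF^{\mathrm{wa}}$ is a countable union $\bigcup_\tau Z_\tau$ of closed subvarieties of $\bF$ defined over $\bE$, indexed by the possible Harder--Narasimhan types of parabolic reductions of the $G$-isocrystal attached to $b$; since $[b]\in B(G,\mu)$ the weakly admissible locus is nonempty, so every $Z_\tau$ is a \emph{proper} subvariety. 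A generic $x$ avoids each of them, hence $x\in\bF^{\mathrm{wa}}(K)$, and by Colmez--Fontaine one gets the crystalline representation $\xi_x\colon\Gal(\bar{\bE}/K)\to G(\Q_p)$, canonical up to $G(\Q_p)$-conjugacy, with $D_{\mathrm{cris}}(\xi_x)=(N_b,\mathrm{Fil}_x)$, where $N_b$ depends only on $b$ and $\mathrm{Fil}_x$ is the filtration encoded by $x$.

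For the monodromy, let $\Gamma_x\subset G(\Q_p)$ be the closure of the image of $\xi_x$ --- a compact $p$-adic Lie group --- and $G_x\subseteq G$ its Zariski closure, which I take connected after a finite extension of $K$. A short argument with $p$-adic Lie groups shows $\Lie(\Gamma_x)$ is an $\mathrm{Ad}(G_x)$-stable, hence ideal, subalgebra of $\Lie(G_x)$ that contains $\Lie(G_x^{\mathrm{der}})$ (a missing simple factor would make the image of $\Gamma_x$ in the corresponding quotient finite yet Zariski dense). So it suffices to prove $G_x\supseteq G^{\mathrm{der}}$, i.e.\ $G_x^{\mathrm{der}}=G^{\mathrm{der}}$: this yields $\Lie(\Gamma_x)\supseteq\mathfrak g^{\mathrm{der}}$ and hence $\Gamma_x\cap G^{\mathrm{der}}(\Q_p)$ open in $G^{\mathrm{der}}(\Q_p)$. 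First I would check that $G_x$ is reductive: otherwise $W:=V^{R_u(G_x)}$ is a nonzero proper $\Q_p$-subspace of the faithful representation $V$ which is $\Gamma_x$-stable, hence crystalline, so $D_{\mathrm{cris}}(W)\subset N_b$ is a $\varphi$-stable subspace that is itself weakly admissible; by Fontaine $t_H(D_{\mathrm{cris}}(W))=t_N(D_{\mathrm{cris}}(W))$, i.e.\ the Newton and Hodge polygons of $N_b$ meet at the interior abscissa $\dim_{\Q_p}W$, contradicting HN-irreducibility. Next, suppose $G_x^{\mathrm{der}}\subsetneq G^{\mathrm{der}}$, so $G_x$ is reductive and proper in $G$; by Tannakian duality there is a tensor $s\in V^{\otimes}$ fixed by $G_x$ but not by $G$. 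Being $\Gamma_x$-fixed, $s$ is crystalline: its class $[s]\in D_{\mathrm{cris}}(V^{\otimes})$ is $\varphi$-invariant (a condition on $N_b$ alone) and lies in $\mathrm{Fil}^0_x$. The incidence locus $\{z\in\mathcal F:[s]\in\mathrm{Fil}^0_z\}$ is closed, defined over $\Q_p$, and \emph{proper} (because $[s]$ is not $G$-invariant), yet it contains the generic point $x$ --- a contradiction. Hence $G_x\supseteq G^{\mathrm{der}}$, which proves the theorem.

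The delicate step will be the properness of that incidence locus --- equivalently, that a $\mu$-filtration in general position carries no $\varphi$-invariant tensor in $\mathrm{Fil}^0$ beyond those forced by $(G,b)$. Making this uniform over the finitely many types of ``extra'' tensors, and handling reductive but proper $G_x$ (Levi or anisotropic subgroups) that can still move $\mathcal F$ around a lot, is the technical heart of \cite{CH14} and \cite{GLX}; the role of HN-irreducibility is to remove, once and for all, the non-reductive case, where no such properness is available. An alternative route avoids the pointwise Tannakian bookkeeping altogether: the universal Tate module over a connected component of $\mathrm{RZ}_{G,b}^{\mathrm{rig}}$ has large geometric monodromy by the connectedness of the $G(\Q_p)$-tower (Lemma~\ref{connectedness}), and a generic point of the period domain realizes that monodromy arithmetically.
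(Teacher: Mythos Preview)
The paper does not prove this theorem at all: it is quoted verbatim as a black box from \cite[Theorem~C]{CH14} and \cite[Theorem~1.14]{GLX}, and then invoked in the proof of Proposition~\ref{dense}. So there is no ``paper's own proof'' to compare against; what you have written is an attempted reconstruction of the argument in those references.

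Your sketch is in the right spirit --- the Tannakian reduction to ``extra invariant tensors cut out proper closed loci'' is indeed the backbone of \cite{CH14} --- but there is a genuine circularity in the way you have set it up. The tensor $s$ you produce is chosen \emph{after} fixing $x$: it depends on $\Gamma_x$, hence on $x$ itself. So the incidence locus $\{z\in\mathcal F:[s]\in\mathrm{Fil}^0_z\}$ is a closed set that you only know contains $x$ because you built it from $x$; the genericity of $x$ says nothing about avoiding a closed set manufactured out of $x$. What is actually needed is a \emph{countable family} of proper closed subvarieties, defined independently of $x$ (indexed, say, by $G(\overline{\Q}_p)$-conjugacy classes of proper reductive subgroups, or by $\varphi$-invariant tensors in $N_b^{\otimes}$ up to scalar), such that every point with small monodromy lies in one of them. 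Producing that family and checking each member is \emph{proper} in $\mathcal F$ is exactly the ``delicate step'' you flag at the end, and it is not a formality: it is essentially the whole content of the cited theorems. Your write-up reads as if properness follows from ``$[s]$ is not $G$-invariant,'' but that only says the locus is not all of $\mathcal F$ for \emph{that particular} $s$; uniform control as $s$ ranges over the relevant tensors is where HN-irreducibility and the combinatorics of $(G,b,\mu)$ actually enter.

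A smaller point: your weak-admissibility step asserts the complement $\bF^{\mathrm{rig}}\setminus\bF^{\mathrm{wa}}$ is a countable union of Zariski-closed subvarieties defined over $\bE$. Even granting this, note that $\bE=\breve{E}$ already contains transcendentals over $\Q_p$, so ``generic over $\Q_p$'' does not automatically avoid closed sets defined over $\bE$; one has to work over the field generated by $b$ and the Schubert data, which in the cited references is handled by arranging everything over a finitely generated subfield.
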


\vspace{1em}

The property of being generic spreads "nicely" in the sense that for a moduli point $x \in \mathrm{RZ}^{\text{rig}}_{G,b}(\ok)$, there exist generic points in any neighborhood of $x$ up to possibly replace $K$ with a finite extension. We prove this by proving that the $\hat{\Q}_{p}^{un}$ points of $\breve{\mathcal{F}}^{\text{a}}$ are well-approximated by generic points.

\begin{lem}\label{approximation}
Given $a_{1}, \cdots, a_{n} \in K_0 $ and $r \ge 0$. There exists $b_{1}, \cdots, b_{n} \in K_0$ such that

\begin{itemize}
\item $b_{1}, \cdots, b_{n}$ are algebraically independent over $\Q_{p}$
\item For each $1 \le i \le n$, $|b_{i}-a_{i}|_{p} < p^{-r}$.	
\end{itemize}
  
\end{lem}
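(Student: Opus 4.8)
The plan is to produce the $b_i$ by perturbing the $a_i$ with a carefully chosen small transcendental element, exploiting the fact that $K_0$ (a $p$-adically complete field containing $\breve{\Q}_p$, hence of infinite transcendence degree over $\Q_p$) contains many elements algebraically independent over $\Q_p$. First I would fix a single element $t \in K_0$ with $|t|_p < p^{-r}$ that is transcendental over the field $L := \overline{\Q_p(a_1,\dots,a_n)}^{\,\mathrm{alg}}$ generated by the given data; such a $t$ exists because $L$ is a countable-dimensional (in fact algebraic-over-finitely-generated) extension of $\Q_p$, while the closed ball $\{|z|_p < p^{-r}\}$ in $K_0$ is uncountable and not contained in any proper algebraically closed subfield — concretely, one can take $t = \sum_{k \ge 1} p^{r + n_k} \zeta_k$ for a rapidly growing sequence $n_k$ and roots of unity (or Teichmüller digits) chosen to avoid the countably many algebraic relations, a standard diagonalization. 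Then I would set
\[
b_i = a_i + p^{(i-1)}\, t \cdot p^{\,s}
\]
for a suitable fixed shift $s$ large enough that $|b_i - a_i|_p < p^{-r}$ for all $i$; the point of the distinct powers $p^{(i-1)}$ (or, more robustly, multiplying $t$ by distinct $\Q_p$-linearly independent small scalars) is to guarantee that $b_1,\dots,b_n$ are not merely each transcendental over $\Q_p$ but jointly algebraically independent.

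The key step is the joint algebraic independence. Here I would argue as follows: the extension $\Q_p(b_1,\dots,b_n) \subset L(t)$, and since $t$ is transcendental over $L$ we have $\operatorname{trdeg}_{\Q_p} L(t) = \operatorname{trdeg}_{\Q_p} L + 1$. If the $b_i$ satisfied a nontrivial polynomial relation over $\Q_p$, then because each $b_i = a_i + c_i t$ with $c_i \in L^\times$ and $a_i \in L$, substituting and viewing the relation as a polynomial identity in $t$ over $L$ forces (comparing the top-degree coefficient in $t$) a nontrivial algebraic relation among the $c_i$ over $L$ — which fails by construction if the $c_i$ are chosen in $\Q_p^\times$ (so that the leading coefficient is a nonzero element of $\Q_p[c_1,\dots,c_n] \subset \Q_p$, a contradiction unless the relation was trivial). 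This reduces the $n$-variable statement to the single transcendence of $t$ over $L$, which is the only genuinely infinitary input.

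I expect the main obstacle to be precisely the existence of the transcendental perturbation $t$ of prescribed small absolute value: one must know that $K_0$ — the fraction field of the Witt vectors $W(k)$ with $k = \overline{\F}_p$, completed, i.e. $\breve{\Q}_p$ or a finite extension thereof — has infinite transcendence degree over $\Q_p$ \emph{and} that transcendental elements are $p$-adically dense (indeed, that every open ball contains one). Both hold because $\breve{\Q}_p$ is an infinite-dimensional $\Q_p$-Banach space whose algebraic elements form a set of measure zero / meager set, so the complement meets every ball; alternatively one writes down an explicit Liouville-type series as above and checks transcendence by a valuation/growth argument. Once $t$ is in hand, the rest is the elementary algebra of the previous paragraph, and the bound $|b_i - a_i|_p < p^{-r}$ is immediate from $|t|_p$ being small and the $c_i \in \Z_p$. \qed
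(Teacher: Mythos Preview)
There is a genuine gap in your joint–algebraic–independence step, and it is not a detail that can be patched within the single-transcendental framework you set up.

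Your construction is $b_i = a_i + c_i t$ with $c_i \in \Q_p^\times$ and a single element $t$ transcendental over $L = \overline{\Q_p(a_1,\dots,a_n)}$. All of the $b_i$ then lie in $L(t)$, which has transcendence degree $\operatorname{trdeg}_{\Q_p} L + 1$ over $\Q_p$. If the $a_i$ happen to be algebraic over $\Q_p$ (for instance $a_1=\dots=a_n=0$, or after the reduction to $a_i \in \Q_p^{un}$ that the paper itself makes), this transcendence degree is $1$, so for $n\ge 2$ the $b_i$ \emph{cannot} be algebraically independent. Concretely, with $a_i=0$ and $c_i=p^{i-1}$ you get $b_i = p^{i-1}t$, and the linear relation $p\,b_i - b_{i+1}=0$ already kills independence. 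Your argument that ``the leading coefficient $P_d(c_1,\dots,c_n)$ is a nonzero element of $\Q_p$'' is exactly where this fails: a nonzero homogeneous $P_d\in \Q_p[X_1,\dots,X_n]$ certainly can vanish at a tuple of nonzero $\Q_p$-scalars (e.g.\ $P_d = c_2 X_1 - c_1 X_2$), and no choice of ``$\Q_p$-linearly independent'' scalars prevents this.

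What is actually needed is $n$ \emph{jointly} algebraically independent small perturbations $t_1,\dots,t_n$, not scalar multiples of one $t$. This is precisely what the paper does: after reducing to algebraic $a_i$ and then to a single ball $B(a,r)$, it invokes an explicit construction (Nishioka) of an uncountable family of elements of $\hat{\Q}_p^{\mathrm{un}}$, all lying in that ball and pairwise (indeed jointly) algebraically independent over $\Q_p$; one then simply picks $n$ of them. Your existence argument for a single transcendental $t$ in any ball is fine and could serve as the base of an inductive construction of such $t_1,\dots,t_n$, but note that the inductive step is not a pure cardinality/Baire argument (the algebraic closure of $\Q_p(t_1,\dots,t_{k})$ is already uncountable), so you would still need something like the explicit series construction to proceed.
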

 \begin{proof}
 	Since $\Q_{p}^{un}$ is dense in $\hat{\Q}_{p}^{un}$ we can assuming that $a_{1}, \cdots, a_{n} \in \Q_{p}^{un}$ for simplicity. Especially they are algebraic over $\Q_{p}$. Therefore it is enough to prove the case for $a=a_{1}= \cdots= a_{n}$ and the lemma then follows from shifting $b_{i}$ to $b_{i}+a_{i}-a_{1}$ for $i \ge 2$.
 	
 	Denote by $U$ the set of all roots of unity whose orders are coprime to $p$. It is a standard fact that every element of $\hat{\Q}_{p}^{un}$ is uniquely represented as $\sum_{n \ge k} \zeta_{n} p^{n}$ where $\zeta_{n} \in U$ and $k \in \Z$. Let $a=\sum_{n \ge k} \zeta_{n,a} p^{n}$. The $p$-adic neighborhood $$B(a,r):=\{z \in \hat{\Q}_{p}^{un} \mid |z-a|_{p} < p^{-r}\}$$
 	consists of elements of the form $\{\sum_{n=k}^{r}\zeta_{n,a}p^{n} + \sum_{n \ge r+1}\zeta_{n}p^{n}\}$.
 	By \cite[Example 1]{Nis}, for $n \ge r+1$, take  
 	$$\zeta_{n}=\zeta(\ell^{[\lambda n]}), \text{    ($\ell \in P-{p}$, $\lambda \in \R^{+}$)}$$
 	and let $\lambda$ varying in $\R^{+}$, there are uncountably infinitely many algebraically independent transcendental elements inside $B(a,r)$. We conclude the proof by choosing $b_{1},\cdots,b_{n}$ as any $n$ of them. 
 \end{proof}

\begin{prop}\label{dense}
Fix a point of $\mathcal{S}h_{K}(\overline{\F}_p)$ and let $A$ be the associated abelian variety over $\overline{\F}_p$. Let $K/\breve{E}$ be a finite extension, and let $\tilde{A}$ be a lifting of $A$ over $\ok$. Then the set of $\ok$-liftings with large monodromy $\mathcal{S}^{K,A}_{max}$ is dense in $\mathcal{S}^{K,A}$ after possibly replacing $K$ with a finite extension. 
\end{prop}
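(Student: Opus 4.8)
The plan is to transfer the density of generic points on the period domain (which have large monodromy, by the theorem of Chai--Hartl and Gleason--Lourenço--Xu cited above) to a density statement on the formal neighborhood $\mathcal{S}^{K,A}$ of the fixed abelian variety. The key geometric input is the rigid analytic uniformization of Theorem \ref{uniformization} together with the surjectivity of the period map on connected components from Lemma \ref{connectedness}. Roughly, $\mathcal{S}^{K,A}$ sits inside a tube $\mathcal{S}h_{K}^{\mathrm{rig}}(\phi)$ for the isogeny class $\phi$ of $A$, and that tube is, on each connected component, a quotient of a connected component $\mathrm{RZ}_{G,b}^{\mathrm{rig},0}$ of the Rapoport--Zink space by a discrete subgroup of $J_b(\Q_p)$; the covering map $\Theta^{\phi,\mathrm{rig}}$ is a local isomorphism of rigid spaces, so density upstairs gives density downstairs.

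First I would fix a lift $\tilde{A} \in \mathcal{S}^{K,A}(\ok)$ and locate it, via the uniformization, as the image of a point $x_0 \in \mathrm{RZ}_{G,b}^{\mathrm{rig},0}(\ok)$. Composing with the Grothendieck--Messing period map $\breve{\pi}$ gives a point $x = \breve{\pi}(x_0) \in \breve{\mathcal{F}}^{\mathrm{a}}(K) \subseteq \breve{\mathcal{F}}^{\mathrm{wa}}(K)$. Next I would show that $x$ is approximated arbitrarily well (in the $p$-adic topology on $\breve{\mathcal{F}}^{\mathrm{rig}}$, i.e.\ on affine charts) by generic points $x' \in \breve{\mathcal{F}}^{\mathrm{a}}(K')$ for suitable finite extensions $K'/K$: writing an affine chart of $\mathcal{F}$ as $\operatorname{Spec} R$ with $R$ finite over $\Q_p[a_1,\dots,a_n]$, the coordinates of $x$ lie in $K \subseteq \hat{\Q}_p^{\mathrm{un}}\cdot(\text{ramified part})$, and Lemma \ref{approximation} produces algebraically independent transcendental $b_1,\dots,b_n$ that are $p$-adically close to the $a_i(x)$; these cut out a generic point $x'$ in the same chart, and since $\breve{\mathcal{F}}^{\mathrm{a}}$ is open in $\breve{\mathcal{F}}^{\mathrm{wa}}$ (indeed they share the same $\overline{\bE}$-finite points by Colmez--Fontaine \cite{CF00}, and $\breve{\mathcal{F}}^{\mathrm{wa}}$ is admissible open), for $x'$ close enough to $x$ we get $x' \in \breve{\mathcal{F}}^{\mathrm{a}}(K')$. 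Then $x' \in \breve{\mathcal{F}}^{\mathrm{wa}}(K')$ and by the generic-point theorem the crystalline representation $\xi_{x'}$ has image containing an open subgroup of $G^{\mathrm{der}}(\Q_p)$; that is, $x'$ has large monodromy.

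The remaining step is to lift $x'$ back through the period map and the uniformization covering to a point of $\mathcal{S}^{K',A}$ close to $\tilde{A}$, and to check that the monodromy condition is preserved. Since $\breve{\pi}$ is étale and $J_b(\Q_p)$-equivariant, and (by Lemma \ref{connectedness}) $\breve{\pi}$ restricted to the connected component $\mathrm{RZ}_{G,b}^{\mathrm{rig},0}$ still surjects onto $\breve{\mathcal{F}}^{\mathrm{a}}$, there is a point $x_0' \in \mathrm{RZ}_{G,b}^{\mathrm{rig},0}(K')$ with $\breve{\pi}(x_0') = x'$; étaleness of $\breve{\pi}$ lets me take $x_0'$ $p$-adically close to $x_0$ (choosing the branch of the étale map through $x_0$). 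The Tate module representation of $x_0'$ is identified with $\xi_{x'}$ up to $G(\Q_p)$-conjugacy and fixing an isomorphism $V \simeq V_p(X^{\mathrm{univ}}_{x_0',G,b})$, so it has large monodromy. Finally, $\Theta^{\phi,\mathrm{rig}}$ is a topological covering and a local isomorphism of rigid spaces, so it maps a small neighborhood of $x_0'$ isomorphically onto a neighborhood of $\Theta^{\phi,\mathrm{rig}}(x_0')$, which therefore lies in any prescribed $p$-adic neighborhood of $\tilde{A}$ inside $\mathcal{S}^{K',A}$ and has large monodromy. Letting the approximation parameter $r\to\infty$ yields density.

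I expect the main obstacle to be bookkeeping the passage between three topologies: the $p$-adic (Banach) topology on affine charts of the period domain used for the approximation in Lemma \ref{approximation}, the topology of the tube $\mathcal{S}h_{K}^{\mathrm{rig}}(\phi)$, and the formal/adic structure of $\mathcal{S}^{K,A}$ — in particular verifying that "$p$-adically close on a period-domain chart" pulls back, via the étale map $\breve{\pi}$ and the covering $\Theta^{\phi,\mathrm{rig}}$, to "congruent mod a high power of $p$" in $\mathcal{S}^{K,A}$, and that the finite extension $K'$ needed can be chosen compatibly at each stage. A secondary subtlety is that enlarging $K$ to $K'$ replaces $\mathcal{S}^{K,A}$ by $\mathcal{S}^{K',A}$, so the density statement is really "density after base change," exactly as allowed in the statement; one should note that the $\ok$-integrality of the approximating lifts is automatic because they are points of the (integral) Rapoport--Zink formal scheme lying over $\ok$-points via the uniformization. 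I would also remark that the étale local branch of $\breve\pi$ through $x_0$ is what makes the neighborhoods match up: without it one only controls the image, not a specific preimage near $x_0$.
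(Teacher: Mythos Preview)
Your proposal is correct and follows essentially the same route as the paper's own proof: lift to the Rapoport--Zink space via the uniformization of Theorem \ref{uniformization}, push to the period domain via $\breve{\pi}$, approximate by a generic point using Lemma \ref{approximation}, invoke the generic-point theorem for large monodromy, and then pull back through the \'etale period map and the covering $\Theta^{\phi,\mathrm{rig}}$, passing to a finite extension $K'/K$ as needed. The only cosmetic difference is that the paper phrases the approximation step as ``find a generic point inside $W_y=\breve{\pi}(V_y)$'' and then takes any preimage in $V_y$, whereas you phrase it as ``approximate $\breve{\pi}(x_0)$ and lift along the \'etale local branch through $x_0$''; your formulation makes the closeness of the preimage to $x_0$ more transparent, but the content is the same.
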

\begin{proof}
Consider the following diagram:
$$
\begin{tikzcd}
   \mathrm{RZ}_{G, b}^{\textup{rig},0}(K) \arrow[r, "\breve{\pi}"] \arrow[d, swap] & \breve{\mathcal{F}}^{\text{a}}(K) & \\
  \Gamma\backslash \mathrm{RZ}_{G, b}^{\text{rig},0}(K) \arrow[r, "\Theta^{\phi, \mathrm{rig}}"] & \mathcal{S}h_{K,0}^{\text{rig}}(\phi)(K) & \hspace{-35pt}\xhookleftarrow{} \mathcal{S}h_{K,0}^{\text{rig}}(\phi)(\ok)
\end{tikzcd}
$$
	 Take a point $x \in \mathcal{S}^{K,A}$ so that its base change to $K$ is contained in some connected component $\mathcal{S}h_{K,0}^{\mathrm{rig}}(\phi)(K)$. We slightly abuse the notation and also call the $K$-point $x$. Let $U_x$ be a $p$-adic open neighbourhood of $x$. By Theorem \ref{uniformization}, $x$ corresponds to an $\Gamma$-conjugacy class of points $[(X, \rho,(t_\alpha))]$ where $\Gamma$ is a discrete subgroup of $J_b(\Q_p)$. Let $y=(X, \rho, (t_\alpha)) \in \mathrm{RZ}^{\mathrm{rig},0}_{G,b}(K)$ be a representative of this conjugacy class. Note that $X$ is a $p$-divisible group over $K$ that is the same as the $p$-divisible group associated with $x$. Let $V_{y}$ be the preimage of $U_x$ under $\Theta^{\phi, \mathrm{rig}}$. 
     
    Let $W_{y}=\breve{\pi}(V_{y})$ be its image under $\breve{\pi}$. Let $x_0=\breve{\pi}(y)$ be the image of $y$ in $\breve{\mathcal{F}}^{\text{a}}(K)$. According to Lemma \ref{approximation}, there exists a generic point $z \in \breve{\mathcal{F}}^{\text{a}}(K) $ such that $z \in  W_{y}$. The connectedness of the period domain (Lemma \ref{connectedness}) indicates that the fiber $\breve{\pi}^{-1}(z)$ is non-empty in each connected component of $\mathrm{RZ}^{\text{rig}}_{G,b}$. Since $\breve{\pi}^{-1}(z)$ is \'etale over $K$, any connected component of it is a finite extension $K'$ of $K$. After replacing $K$ with $K'$, we obtain a $K$-point $y_0 \in V_{y} \cap \breve{\pi}^{-1}(z)$ that is given by a $\ok$-point on $\mathrm{RZ}^{0}_{G,b}$. This point also has large monodromy and maps to a $\ok$ point $z \in V_x$ under the uniformization map that also has large monodromy.

\end{proof}

\begin{proof}[Proof of Theorem \ref{padic}]
    The theorem follows from Theorem \ref{congruence condition} and Proposition \ref{dense}.
\end{proof}

\newpage
\bibliography{reference}
\end{document}